\newtheorem{theorem}{Theorem}
\newtheorem{definition}{Definition}
\newtheorem{lemma}{Lemma}
\newtheorem{corollary}{Corollary}
\newtheorem{prop}{Proposition}
\newtheorem{remark}{Remark}
\newtheorem{observation}{Observation}
\newtheorem{assumption}{Assumption}
\def\scr#1{{\cal #1}}
\newcommand{\R}{\mathbb{R}}
\newcommand{\0}{\mathbf{0}}
\newcommand{\1}{\mathbf{1}}
\title{Analysis, Identification, and Validation of Discrete-Time Epidemic Processes}
\author{Philip E. Par\'{e}, Ji Liu, Carolyn L. Beck, Barret E. Kirwan, and Tamer Ba\c sar
*\thanks{
* Philip E. Par\'{e}, Tamer Ba\c sar, and Carolyn L. Beck are with the Coordinated Science Laboratory at the University of Illinois at Urbana-Champaign (UIUC) and can be reached at {\tt philip.e.pare@gmail.com}, {\tt basar1@illinois.edu}, and {\tt beck3@illinois.edu}, respectively. 
Ji Liu is with the Department of Electrical and Computer Engineering at Stony Brook University and can be reached at {\tt ji.liu@stonybrook.edu}. 
Barrett Kirwan is with the Agricultural and Consumer Economics Department at UIUC. 
 This material is based on research partially sponsored by the USDA, CA 58-6000-4-0028, and the National Science Foundation, grant CPS 1544953.  All material in this paper represents the position of the authors and not necessarily that of the NSF or the USDA.
}}
\begin{document}
\maketitle

\begin{abstract}
Models of spread processes over non-trivial networks are commonly motivated by modeling and analysis of biological networks,  computer networks, and human contact networks. 
However, identification of such models has not yet been explored in detail, and the models have not been validated by real data. 
In this paper, we present several different spread models from the literature and explore their relationships to each other; for one of these processes, we present a sufficient condition for asymptotic stability of the healthy equilibrium, show that the condition is necessary and sufficient for uniqueness of the healthy equilibrium, and present necessary and sufficient conditions for learning the spread parameters. Finally, we employ two real datasets, one from John Snow's seminal work on cholera epidemics in London in the 1850's and the other one from the United States Department of Agriculture, to validate an approximation of a well-studied network-dependent susceptible-infected-susceptible (SIS) model. 
\end{abstract}

\section{Introduction}

Spread processes have been studied in many fields. In the systems and control community, the main interest has been on SIS spread models over non-trivial networks. These models have been proposed for discrete time \cite{wang2003epidemic,chakrabarti2008epidemic,ahn2013global} and continuous time \cite{ahn2013global,van2009virus,preciado2014optimal,pare2015stability,pare2017epidemic}, and are based on an infection parameter $\beta$ and a healing rate $\delta$. A virus model is called \textit{homogeneous} if the infection and healing rates are the same for every agent, and \textit{heterogeneous} if they are different for each agent. 
In this work, we will focus  on discrete-time SIS  models.


In \cite{wang2003epidemic}, Wang {\em et al.} introduce a discrete-time homogeneous virus spread model that is dependent on a nontrivial undirected graph structure. The authors give an epidemic threshold for the model in terms of the maximum eigenvalue of the matrix depicting the graph structure in relation to the ratio of $\beta$ and $\delta$ that ensures convergence to the healthy state, that is, where the virus is eradicated. In \cite{van2009virus}, Van Mieghem {\em et al.} point out that the model in \cite{wang2003epidemic} is only accurate for spread processes if the virus is being eradicated. 
In \cite{chakrabarti2008epidemic}, Chakrabarti {\em et al.} explore the same model as \cite{wang2003epidemic} but in more detail. 
Ahn and Hassibi, in \cite{ahn2013global},  study both discrete- and continuous-time homogeneous SIS models.  Both the healthy and the endemic states of several models are considered, and existence, uniqueness, and stability conditions for special cases of the endemic state are established. They also provide a sufficient condition for global stability of the endemic state for the model in \cite{wang2003epidemic,chakrabarti2008epidemic}.

The model we focus on in this work is similar to a special case in \cite{ahn2013global}. However, the model in \cite{ahn2013global} assumes homogeneous virus spread and an unweighted adjacency matrix. The models in Sections \ref{sec:mod} and \ref{sec:analysis} are not limited by these assumptions. 

While parameter estimation of epidemic spread with real data has been carried out for some models \cite{keeling2001dynamics,miao2011identifiability,kolesnichenko2016codered}, the previous work has either not had network structure included or employed a large probabilistic model. Ignoring network structure is clearly a huge assumption and using a full probabilistic model can become very computationally expensive as the size of the network grows. For these reasons we focus on a nonlinear network-dependent ordinary differential equation model. To the best of our knowledge, no work has been done on the identification of spread parameters from data for these models. Since these are the main models studied in the controls field, validation of the models is important. 
Many virus spread papers using these models have claimed to use real data to test their models, but no true validation 
of non-trivial network-dependent SIS spread models has been done. Those papers that use real data only build the network structure using real data, but do not have real spread process data over that network. 
In \cite{wan2007network,wan2008designing}, Wan {\em et al.} compare their model to a simulator of SARS, not real data. 
In \cite{chakrabarti2008epidemic}, Chakrabarti {\em et al.} use a router network from the state of Oregon and simulate an artificial spread process over that network. To the best of our knowledge, no work has been done that validates network-dependent SIS models using a set of real spread data. 
Similarly, in \cite{preciado2014optimal}, Preciado {\em et al.} use data from an air transportation network but simulate using arbitrarily chosen healing and infection rates.

We will use two datasets to validate the spread model analyzed in this work. The first dataset is the cholera dataset compiled by John Snow in \cite{snow1855mode}. 
Dr. Snow mapped the deaths caused by cholera in the Soho District of London in 1854 to illustrate that the infection was being spread by contaminated water via a specific pump, the Broad Street pump, and not via the air, as was the belief at the time. This seminal work by Snow has led to the modern day field of epidemiology \cite{bonita2006basic}. While now, partially due to Snow, we understand cholera, how it spreads, and how to mitigate it, it is still a serious problem in poorer parts of the world today. This is highlighted by the current outbreak in Yemen where there have been over 822,000 suspected cases of cholera and over 2,150 cholera-related deaths since the end of April 2017 \cite{whoCholera,pbsCholera}. 

John Snow's original spatial dataset of the cholera epidemic is static and does not contain time series data. Shiode {\em et al.} created spatial time series data in \cite{shiode2015mortality} using various other sources and some statistical methods. However, Shiode {\em et al.} did not perform any dynamic analysis on their dataset, and have not made the dataset publicly available. We use a technique developed in the analysis section herein, combined with several strong but reasonable assumptions, to reproduce time series data, and in so doing, validate the model with the dataset. As far as we know, this is the first attempt to study Snow's cholera dataset from a dynamical systems' perspective to validate models of epidemic processes. 

The second dataset used herein is a record of all the payouts from the United States Department of Agriculture ({USDA}) to farms/farmers for all USDA-sponsored subsidy programs from 2008-2013. For this work we focus on the 2009 Average Crop Revenue
Election (ACRE) Program, which was introduced in that year as an alternative to an existing program, the Direct and Counter-Cyclical Payment (DCP) Program \cite{usdaFact,dcp}. These programs are in place to reduce the risk in the U.S. farming industry, enabling the adoption of new technologies. One of the goals of this paper is to determine whether the adoption of the ACRE program followed a network-dependent discrete-time spread process consistent with the model studied herein. 

A large body of literature in agricultural economics has modeled the adoption and diffusion of agricultural technology, e.g., fertilizer and new seed varieties, (see, e.g., \cite{Sunding2001} for a review of this literature). This literature generally models individuals' decisions to adopt new technologies or the extent of overall adoption, but the spread of information and technology is treated as a ``black box''. Recent work in developing countries has examined whether farmers learn about new technologies from ``information neighbors''. Foster {\em et al.} examine survey data and find that farmers' adoption of high-yielding varieties during the Green Revolution depended on neighbors' experiences \cite{foster1995learning}. Recent evidence from randomized controlled trials shows that farmers learn from their neighbors' experience when the technology is novel or complex \cite{Conley2010a}, but not when adjusting current practices \cite{Duflo2011}. Ghanaian farmers learned from neighbors' experience when switching from traditional crops to pineapple \cite{Conley2010a}, whereas information about optimal fertilizer use for traditional crops in Kenya did not spread among neighbors \cite{Duflo2011}. We take a new approach by using virus spread models to characterize the spread of complex information among U.S. farmers.

A preliminary version of this work has been submitted to the American Control Conference \cite{usda_acc}. The two pieces of work are quite different. Specifically, this paper provides 1) the complete proofs of all the results, 2) additional illustrative simulations, and 3) the validation of the model using the Snow cholera dataset,
which were not included in \cite{usda_acc}.

The paper is organized as follows. In Section~\ref{sec:mod}, the virus spread models are introduced with several remarks that provide insight into how the models are related to each other. 
In Section~\ref{sec:analysis}, we analyze one of the discrete-time spread processes from Section~\ref{sec:mod} that has not been explored in detail. 
In Section~\ref{sec:id}, we present necessary and sufficient conditions for learning, or identifying, the spread process parameters of the same model, from data produced by the models. In so doing, we establish several assumptions that need to be met by the USDA data. 
In Section~\ref{sec:sim}, we validate the results from Section~\ref{sec:id} via simulation. 
In Section~\ref{sec:snow}, we introduce Dr. Snow's seminal cholera dataset from 1854 and use it to validate the spread model. 
In Section~\ref{sec:usda}, we introduce the USDA dataset and the associated subsidy programs, and we learn the homogeneous spread parameters of the ACRE program using data from one part of the country 
and verify the learned parameters by simulating the spread model over the complete contiguous United States and comparing the simulated data with the actual data. We conclude with some discussion of the results and future work in Section~\ref{sec:con}.
\subsection{Notation}

Given a vector function of continuous time $x(t)$, we use $\dot{x}(t)$ to indicate the time-derivative. Given a vector function of discrete time $x^k$, the superscript indicates the time-step of $x$. Given a vector $x \in \mathbb{R}^{n}$, the 2-norm is denoted by $\|x\|$ and the transpose by $x^{\top}$. 
The notation $\0$  denotes the vector whose entries all equal $0$. 
Given two vectors $x_1,x_2\in \mathbb{R}^{n}$, $x_1 > x_2$ indicates each element of $x_1$ is greater than or equal to the corresponding element of $x_2$ and $x_1 \neq x_2$, and $x_1 \gg x_2$ indicates each element of $x_1$ is strictly greater than the corresponding element of $x_2$. 
Given a matrix $A \in \mathbb{R}^{n \times n}$, the maximum eigenvalue 
is $\lambda_1(A)$ (if the spectrum is real), and the largest real-valued part of the eigenvalues of $A$ is denoted by $s_1(A)$ (if the spectrum is possibly complex). 
Also, $a_{ij}$ indicates the $i, j^{th}$ entry of the matrix $A$, and $\| A \|_{F}$ indicates the  Frobenius norm of $A$. 
The notation $diag(\cdot)$ refers to a diagonal matrix with the argument(s) on the diagonal. 

\section{SIS Models}\label{sec:mod}

We introduce two discrete-time SIS models and discuss their relationship. 
For these SIS models, there are two levels of granularity for modeling the system. The state $x_i$ can correspond to a probability of infection of the $i$th agent \cite{van2009virus} or to the percentage of infection of group $i$ \cite{fall2007epidemiological}.
For the identification of the spread process parameters in the USDA dataset in Section \ref{sec:usda}, we employ the latter case.

The first discrete-time model is derived from the continuous-time model 
\begin{equation}\label{eq:cont}
    \dot{x}_{i} = (1-x_{i})\beta_i \sum^{n}_{j=1} a_{ij}x_{j}- \delta_i x_{i},
\end{equation}
where $i$ indicates the $i$th agent or group $i$, $x_i$ is the infection level, $\beta_i>0$ is the infection rate, $\delta_i>0$ is the healing rate, and $a_{ij}\geq 0$, edge weights between the agents/groups. 
Applying Euler's method \cite{atkinson2008introduction} to \eqref{eq:cont} gives 
\begin{equation}\label{eq:dis}
    x_{i}^{k+1} = x_{i}^{k} + h\left((1-x_{i}^{k})\beta_i \sum^{n}_{j=1} a_{ij}x_{j}^{k}- \delta_i x_{i}^{k}\right),
\end{equation}
where $k$ is the time index and $h>0$ is the sampling parameter. 
We can write \eqref{eq:dis} in matrix form 
\begin{equation}\label{eq:disM}
     x^{k+1} = x^k + h((I-X^k)BA-D)x^k,
\end{equation}
where $X^k = diag(x^k)$, $B = diag(\beta_i)$, and $D = diag(\delta_i)$. 
Note that $A$ is the matrix of $a_{ij}$'s and is not necessarily symmetric. 
\begin{remark}
  The model in \eqref{eq:cont} was derived from a mean field approximation of a $2^n$ state Markov chain model \cite{van2009virus}:
\begin{equation}\label{eq:2n}
    \dot{y} = Q y,
\end{equation}
where $Q$ is the transition matrix of the Markov chain 
(the details of the $2^n$ state model are not needed for the discussion here, and hence are not included; for a more detailed discussion, see \cite{pare2017epidemic}). Therefore, \eqref{eq:dis} is an approximation of an approximation.
\end{remark}

An alternative discrete-time model, studied in \cite{ahn2013global}, is
\begin{equation}\label{eq:has}
    x_i^{k+1} = x_i^{k}(1-\delta_i) + (1-x_i^{k})\left( 1 - \prod_{j=1}^n(1 - \beta_i a_{ij}x_j^{k})\right).
\end{equation}
By expanding the model given in \eqref{eq:has}, we obtain
\footnotesize\begin{equation*}
    x_i^{k+1} = x_i^{k} - (1-x_i^{k})\left[ - \beta_i \sum^{n}_{j=1} a_{ij}x_j^{k} + \cdots +  \beta_i^n \prod_{j=1}^n (- a_{ij}x_j^{k})\right] -  \delta_i x_i^{k}.
\end{equation*}\normalsize
\begin{remark}
   If we assume $\beta_i<1 \ \forall i$, the model in \eqref{eq:has} can be approximated by truncating the terms with powers of $\beta_i$ greater than $1$, giving:  
   \begin{equation}\label{eq:app}
       x_i^{k+1} = x_i^{k} + (1-x_i^{k}) \beta_i \sum^{n}_{j=1} a_{ij}x_j^{k} -  \delta_i x_i^{k}.
   \end{equation}
\end{remark}
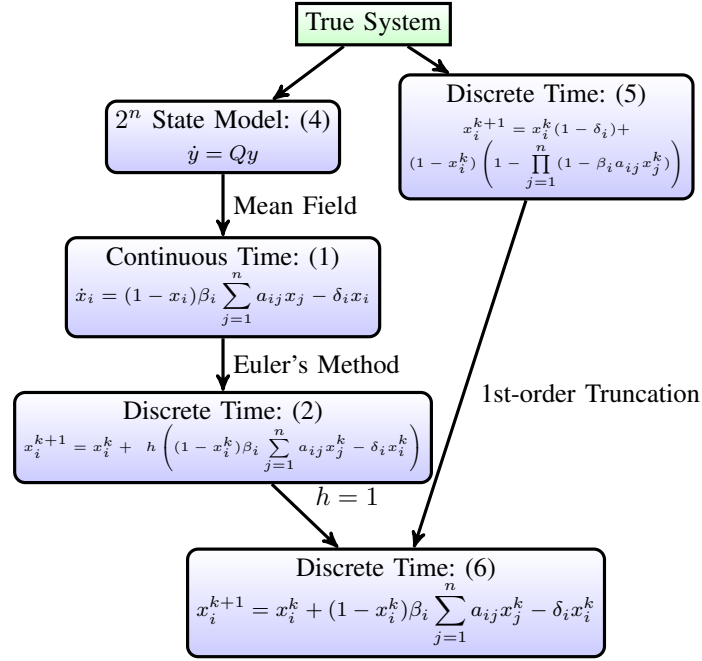
\begin{figure}
\centering
\begin{tikzpicture}[sibling distance=10em,
  every node/.style = {}]]
\begin{scope}[xshift=-7.5cm,yshift=-5cm,very thick,
		node distance=2cm,on grid,>=stealth',
		block/.style={rectangle,rounded corners,draw,align=center,top color=white, bottom color=blue!20},
		comp/.style={rectangle,draw,align=center,top color=white, bottom color=green!20}]
   \node [block] (bot)					{Discrete Time: \eqref{eq:app} \\ \footnotesize $x_i^{k+1} = x_i^{k} + (1-x_i^{k}) \beta_i \displaystyle\sum^{n}_{j=1} a_{ij}x_j^{k} -  \delta_i x_i^{k}$ \normalsize};
   \node [block]	 (mf)	[above=of bot,xshift=-2.3cm,yshift=.2cm]			{Discrete Time: \eqref{eq:dis} \\ \tiny $x_{i}^{k+1} = x_{i}^{k} +  \ \  h\left((1-x_{i}^{k})\beta_i \displaystyle\sum^{n}_{j=1} a_{ij}x_{j}^{k}- \delta_i x_{i}^{k}\right)$ \normalsize} edge [->, "$h=1$"] (bot);
   \node [block] (cont)	[above=of mf]		{Continuous Time: \eqref{eq:cont} \\ \scriptsize $\dot{x}_{i} = (1-x_{i})\beta_i \displaystyle\sum^{n}_{j=1} a_{ij}x_{j}- \delta_i x_{i}$ \normalsize} edge [->, "Euler's Method"] (mf);
   \node [block] (2n)	[above=of cont]		{$2^n$ State Model:  \eqref{eq:2n}\\ \footnotesize $\dot{y} = Qy$ \normalsize} edge [->, "Mean Field" ] (cont);
   \node [block] (dis)	[right=of 2n,xshift=2.3cm]		{Discrete Time: \eqref{eq:has}\\ \tiny $x_i^{k+1} = x_i^{k}(1-\delta_i) + $ \\ \tiny $(1-x_i^{k})\left( 1 - \displaystyle\prod_{j=1}^n(1 - \beta_i a_{ij}x_j^{k})\right)$ \normalsize} edge [->, "1st-order Truncation" ] (bot);
   \node [comp]  (sys)  [above=of 2n,xshift=2cm,yshift=-.5cm] {True System} edge [->] (dis) edge [->](2n);
\end{scope}
\end{tikzpicture}
\caption{A graphical illustration of the discussion in Section \ref{sec:mod} and the point in Observation \ref{obs:app}, showing how the two discrete-time spread models are related. The first modeling layer shows the $2^n$ state models. The arrows indicate different approximations taken.}
\label{fig:mod}
\end{figure}
\noindent The preceding discussion leads us to the following observation.
\begin{observation}\label{obs:app}
   The approximation given by \eqref{eq:app} and the discrete approximation of the mean field approximation of the continuous $2^n$ state Markov model in \eqref{eq:dis} are equivalent, given $h=1$.
\end{observation}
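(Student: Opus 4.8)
The plan is to proceed by direct substitution, since the claim is fundamentally an algebraic identity between two scalar update rules rather than a statement requiring nontrivial analysis. First I would take the discrete-time update \eqref{eq:dis}, whose scalar form reads $x_{i}^{k+1} = x_{i}^{k} + h\left((1-x_{i}^{k})\beta_i \sum_{j=1}^{n} a_{ij}x_{j}^{k} - \delta_i x_{i}^{k}\right)$, and set the sampling parameter $h=1$. This removes the multiplicative constant in front of the bracketed term, leaving the right-hand side as $x_i^k + (1-x_i^k)\beta_i \sum_{j=1}^n a_{ij} x_j^k - \delta_i x_i^k$.

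Next I would compare this expression term by term with the truncated model \eqref{eq:app}, namely $x_i^{k+1} = x_i^{k} + (1-x_i^{k})\beta_i \sum_{j=1}^{n} a_{ij}x_j^{k} - \delta_i x_i^{k}$. The current-state term $x_i^k$, the infection term $(1-x_i^k)\beta_i \sum_j a_{ij} x_j^k$, and the healing term $-\delta_i x_i^k$ all coincide exactly. Since the identity holds for each agent index $i$ and every time step $k$, the two recursions generate identical trajectories from any shared initial condition, which is the asserted equivalence.

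There is no genuine analytical obstacle here: the two models were constructed from different starting points — Euler discretization of \eqref{eq:cont} on the one hand, and first-order truncation of \eqref{eq:has} on the other — and the content of the observation is precisely that these independent derivations collapse to the same difference equation once the Euler step size is taken to be unity. The only point warranting care is confirming that the truncation in the preceding remark, which retains only the first-order-in-$\beta_i$ term of the expanded product, reproduces the infection term $(1-x_i^k)\beta_i \sum_j a_{ij} x_j^k$ with the correct sign; this follows directly from the leading term of the expansion already displayed immediately before \eqref{eq:app}.
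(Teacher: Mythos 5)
Your proposal is correct and matches the paper's own treatment: the observation is justified there by exactly the same direct comparison, namely that setting $h=1$ in \eqref{eq:dis} yields term-by-term the truncated update \eqref{eq:app}, with the sign of the infection term handled by the expansion displayed just before \eqref{eq:app}. No further argument is needed, and your attention to the sign in the first-order truncation is the only point of care the paper's discussion also implicitly relies on.
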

\noindent The relationships between the models introduced in this section are depicted in Figure~\ref{fig:mod}. 
The first layer of modeling is the most detailed, where the left side is the model given by \eqref{eq:2n} and the right side is that given by \eqref{eq:has}.



\section{Analysis}
\label{sec:analysis}

In this section, a different 
version of the model in \eqref{eq:dis} will be analyzed, as follows: 
\begin{equation}\label{eq:disG}
     x^{k+1} = x^k + h((I-X^k)B-D)x^k,
\end{equation}
where $[B]_{ij} = \beta_{ij}$, capturing the infection rate and nearest-neighbor graph structure in one. Note $\beta_{ij}$ could be factored into $\beta_i  a_{ij}$ as in \eqref{eq:dis}.
\begin{assumption}
For all $i\in[n]$, we have $x^{0}_i\in[0,1]$.
\label{x0}
\end{assumption}

\begin{assumption}
For all $i\in[n]$, we have $\delta_i\geq0$ and, for all $j\in[n]$, $\beta_{ij} \geq 0$.
\label{pos}
\end{assumption}

\begin{assumption}
For all $i\in[n]$, we have $h\delta_i\leq 1$ and $h\sum_{j=1}^n \beta_{ij}\leq 1$.
\label{01}
\end{assumption}


\begin{lemma}
For the system in \eqref{eq:disG}, under the conditions of Assumptions  \ref{x0}, \ref{pos}, and \ref{01}, $x^{k}_i\in[0,1]$ for all $i\in[n]$ and $k\ge 0$.
\label{lem:box}
\end{lemma}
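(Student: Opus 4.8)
The plan is to proceed by induction on the time index $k$, with the base case $k=0$ supplied directly by Assumption~\ref{x0}. Before inducting, I would rewrite \eqref{eq:disG} in scalar form by reading off the $i$th component, which gives $x_i^{k+1} = (1-h\delta_i)x_i^k + h(1-x_i^k)\sum_{j=1}^n \beta_{ij}x_j^k$. This grouping is the crux of the whole argument: it isolates a ``decay'' term $(1-h\delta_i)x_i^k$ and a nonnegative ``infection'' term, and it is the form that makes both bounds transparent. For the inductive step I would assume $x_i^k\in[0,1]$ for every $i$ and prove the two inequalities $0\le x_i^{k+1}$ and $x_i^{k+1}\le 1$ separately.

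For the lower bound, I would check that each grouped term is nonnegative. The factor $1-h\delta_i$ is nonnegative by the first half of Assumption~\ref{01} ($h\delta_i\le 1$), and $x_i^k\ge0$ by the inductive hypothesis, so the decay term is nonnegative; the infection term is a product of $h\ge0$, $1-x_i^k\ge0$ (inductive hypothesis), and $\sum_j\beta_{ij}x_j^k\ge0$ (Assumption~\ref{pos} together with $x_j^k\ge0$), hence also nonnegative. Adding them gives $x_i^{k+1}\ge0$.

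For the upper bound I would use the complementary identity $1-x_i^{k+1} = (1-x_i^k)\bigl(1 - h\sum_{j=1}^n\beta_{ij}x_j^k\bigr) + h\delta_i x_i^k$, obtained by subtracting the scalar update from $1$. The second summand is nonnegative since $h,\delta_i,x_i^k\ge0$, and the first is a product of $1-x_i^k\ge0$ with the bracketed factor, so it remains only to show the bracket is nonnegative, i.e.\ $h\sum_j\beta_{ij}x_j^k\le1$. Bounding $x_j^k\le1$ and using $\beta_{ij}\ge0$ reduces this to a row-sum bound, which is exactly the second half of Assumption~\ref{01} once the self term is accounted for (the sum there is taken over $j\neq i$, consistent with the absence of self-loops, $\beta_{ii}=0$).

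I expect the upper bound to be the delicate step, and specifically the treatment of the bracket $1-h\sum_j\beta_{ij}x_j^k$. Two things must line up there: the inductive hypothesis $x_j^k\le1$ is needed to pass from the weighted sum to the plain row sum, and Assumption~\ref{01} must then control that row sum. The diagonal term deserves care---a one-node instance with $\delta_i=0$ and $h=1$ shows that an arbitrarily large $\beta_{ii}$ would let the state overshoot $1$ in a single step---so the argument implicitly relies on the off-diagonal row-sum bound of Assumption~\ref{01} being the operative constraint. The lower bound, by contrast, is immediate once the update is written in the grouped scalar form.
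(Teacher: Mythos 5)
Your proof is correct and takes essentially the same route as the paper's: induction on $k$ with the update rearranged as $x_i^{k+1} = (1-h\delta_i)x_i^k + h(1-x_i^k)\sum_{j=1}^n \beta_{ij}x_j^k$; the paper merely packages your two separate bounds as the single observation that $x_i^{k+1}$ is a convex combination, with weights $x_i^k$ and $1-x_i^k$, of the two quantities $1-h\delta_i$ and $h\sum_{j=1}^n\beta_{ij}x_j^k$, each of which lies in $[0,1]$. Your caveat about the diagonal term is a sharp catch, and it applies equally to the paper's own proof: the paper asserts $h\sum_{j=1}^n\beta_{ij}x_j^k\in[0,1]$ by Assumptions~\ref{pos} and~\ref{01}, yet Assumption~\ref{01} bounds only $h\sum_{j\neq i}\beta_{ij}$, so both your proof and the paper's implicitly require $\beta_{ii}=0$ (or a row-sum bound that includes the diagonal). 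Indeed, your one-node example with $\delta_i=0$, $h=1$, $x_i^0=\tfrac{1}{2}$, and $\beta_{ii}$ large satisfies Assumptions~\ref{x0}--\ref{01} but exits $[0,1]$ in one step, so this is a genuine (if minor) gap in the lemma's stated hypotheses rather than in your argument, which is otherwise exactly the paper's.
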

\begin{proof}
Suppose that at some time $k$, $x^{k}_i\in[0,1]$ for all $i\in[n]$.
Consider an index $i\in[n]$.
Rearranging \eqref{eq:dis}, 
$$x_{i}^{k+1} = x_{i}^{k}(1- h\delta_i) + (1-x_{i}^{k})\left(h \sum^{n}_{j=1} \beta_{ij}x_{j}^{k}\right),$$ 
we see that $x^{k+1}_i$ is a convex combination of $(1-h\delta_i)$ and $h \sum^{n}_{j=1} \beta_{ij}x_{j}^{k}$. Since, by Assumptions \ref{pos} and \ref{01}, $h\delta_i, h \sum^{n}_{j=1} \beta_{ij}x_{j}^{k} \in [0,1]$, we have $x_{i}^{k+1} \in [0,1]$.



Further, by Assumption \ref{x0}, $x^{0}_i\in[0,1]$ for all $i\in[n]$, thus
it follows that $x^{k}_i\in[0,1]$ for all $i\in[n]$ and $k\ge 0$.
\end{proof}
Lemma \ref{lem:box} implies that the set $[0,1]^n$ 
is positively invariant with respect to the system defined by \eqref{eq:disG}. Since $x_i$ 
denotes the probability of infection of individual $i$, or the fraction  of group $i$ infected, 
and $1-x_i$ denotes probability of individual $i$ being healthy, or the fraction of group $i$ that is healthy, it is natural to assume that their initial values are in the interval $[0,1]$, 
since otherwise the values will lack any physical meaning for the epidemic model considered here. 
Therefore, 
we focus on the analysis of \eqref{eq:disG} only on the domain $[0,1]^n$.

We need an assumption to ensure \textit{non-trivial} 
virus spread.
\begin{assumption}\label{not0}
 We have $h\neq 0$ and $\exists i\neq j$ s.t. $\beta_{ij}>0$.
\end{assumption}
Note that we do not assume the healing rates to be nonzero. This allows for the possibility of SI (susceptible-infected) models \cite{zhou2006behaviors}. 
\begin{definition}
Consider an autonomous system
\begin{equation}
     x^{k+1}  = f(x^k), \label{def}
\end{equation}
where $f: \scr{X}\rightarrow\R^n$ is a locally Lipschitz map from a domain $\scr{X}\subset\R^n$ into $\R^n$. 
Let $z$ be an equilibrium of \eqref{def} and $\scr{E}\subset\scr{X}$ be a domain containing $z$.
If the equilibrium $z$ is asymptotically stable such that for any $x^0\in\scr{E}$ we have
$\displaystyle\lim_{k\rightarrow\infty}x^k = z$, then $\scr{E}$ is said to be a domain of attraction for~$z$.
\end{definition}

\begin{prop}
Let $z$ be an equilibrium of \eqref{def} and $\scr{E}\subset \scr{X}$ be a domain
containing $z$. Let $V:\scr{E}\rightarrow\R$ be a continuously differentiable function
such that $V(z)=z$, $V(x)>0$ for all $x$ in $\scr{E}\setminus \{z\}$, 
and $\Delta V^k := V(x^{k+1}) - V(x^k)<0$ for all $x^k$ in $\scr{E}\setminus \{z\}$. If $\scr{E}$ is a positively invariant set,
then the equilibrium $z$ is asymptotically stable with a domain of attraction $\scr{E}$.
\label{prop:lya}
\end{prop}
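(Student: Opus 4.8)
The plan is to prove this as the discrete-time analogue of Lyapunov's direct method, reading the hypothesis $V(z)=z$ as the intended $V(z)=0$ so that $V$ is positive definite on $\scr{E}$ relative to $z$. The one structural fact I would exploit throughout is that, because $\scr{E}$ is positively invariant, every trajectory with $x^0\in\scr{E}$ stays in $\scr{E}$, so the inequality $\Delta V^k<0$ is available at every step. Consequently the scalar sequence $V(x^k)$ is strictly decreasing (until the trajectory reaches $z$, if it ever does) and bounded below by $0$, hence converges to some limit $c^\ast\ge 0$. The proof then splits into establishing (i) Lyapunov stability of $z$ and (ii) $c^\ast=0$, the latter yielding $x^k\to z$ and hence the domain-of-attraction claim.

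For stability I would \emph{not} mimic the continuous-time argument ``the trajectory cannot cross the sphere $\|x-z\|=\varepsilon$'', since in discrete time $x^{k+1}$ can jump over that sphere without ever touching it. Instead I would work with sublevel sets. Fix $\varepsilon>0$ with $\overline{B_\varepsilon(z)}\subset\scr{E}$ and consider $\Omega_c:=\{x\in\scr{E}:V(x)\le c\}$. The key point is that $\Omega_c$ is positively invariant: if $x\in\Omega_c$, then $f(x)\in\scr{E}$ by positive invariance of $\scr{E}$, and $V(f(x))\le V(x)\le c$, so $f(x)\in\Omega_c$. Next I would choose $c$ small enough that $\Omega_c\subset B_\varepsilon(z)$; because $V(z)=0<c$ and $V$ is continuous, $\Omega_c$ also contains a ball $B_\delta(z)$. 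Then $x^0\in B_\delta(z)$ forces $x^k\in\Omega_c\subset B_\varepsilon(z)$ for all $k$, which is exactly stability.

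For attractivity --- which must hold for every $x^0\in\scr{E}$, not only near $z$ --- I would note that the trajectory stays in the positively invariant sublevel set $\Omega_{V(x^0)}=\{x\in\scr{E}:V(x)\le V(x^0)\}$ and argue by contradiction that $c^\ast=0$. If $c^\ast>0$, the whole trajectory lies in $S:=\{x\in\scr{E}:c^\ast\le V(x)\le V(x^0)\}$, a set excluding a neighborhood of $z$. The map $x\mapsto\Delta V(x)=V(f(x))-V(x)$ is continuous ($V$ is $C^1$ and $f$ is locally Lipschitz, hence continuous) and strictly negative on $S$, so if $S$ is compact it attains a maximum $-\gamma<0$ there; summing gives $V(x^k)\le V(x^0)-k\gamma\to-\infty$, contradicting $V\ge 0$. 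Hence $c^\ast=0$, and positive definiteness together with continuity of $V$ forces $x^k\to z$.

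The step I expect to be the genuine obstacle, and the one that silently requires more than ``$\scr{E}$ is a domain'', is the compactness invoked twice above: choosing $c$ so that $\Omega_c\subset B_\varepsilon(z)$ relies on sublevel sets shrinking to $\{z\}$ as $c\downarrow 0$, and the maximization of $\Delta V$ over $S$ needs $S$ compact. Both hold once $\scr{E}$ (equivalently, the relevant sublevel sets of $V$) is compact; without it the conclusion can in fact fail, since a discrete trajectory can escape toward the boundary of $\scr{E}$ while $V$ stays bounded away from $0$. In the paper's setting this is automatic because $\scr{E}$ is taken inside the compact box $[0,1]^n$ established in Lemma~\ref{lem:box}, so I would either add compactness as a standing hypothesis or explicitly restrict to a compact positively invariant sublevel set before running the two arguments above; the remaining manipulations are routine.
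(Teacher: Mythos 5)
Your proposal is correct in substance, but be aware that the paper itself offers essentially no argument: it disposes of Proposition~\ref{prop:lya} in one line by citing the discrete-time Lyapunov stability theorem in \cite{vidyasagar2002nonlinear} together with the definition of domain of attraction. What you have written is, in effect, a proof of that cited theorem, and it is the standard one: positively invariant sublevel sets $\Omega_c$ for stability (rightly avoiding the continuous-time ``the trajectory cannot cross the sphere'' argument, which fails for maps since iterates can jump), and monotonicity of $V(x^k)$ plus a compactness/contradiction argument forcing the limit $c^\ast=0$ for attractivity on all of $\scr{E}$. Your two editorial observations are also on target: $V(z)=z$ must be read as $V(z)=0$ (otherwise positive definiteness is meaningless), and the statement as literally written --- $\scr{E}$ merely a domain --- does require a properness/compactness assumption, since without it sublevel sets need not localize around $z$ and $\Delta V$ need not be bounded away from zero on the set $S$; compactness of $\scr{E}$, which holds in the paper's only use of the proposition, where $\scr{E}=[0,1]^n$ by Lemma~\ref{lem:box}, repairs both steps. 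One refinement you should make: in that application the equilibrium $z=\0$ lies on the \emph{boundary} of $[0,1]^n$, so no $\varepsilon>0$ satisfies $\overline{B_\varepsilon(z)}\subset\scr{E}$; stability must be interpreted relative to $\scr{E}$, i.e., replace balls by the relative neighborhoods $B_\varepsilon(z)\cap\scr{E}$, set $m:=\min\{V(x):x\in\scr{E},\ \|x-z\|\ge\varepsilon\}>0$ (a minimum over a compact set), and take $c<m$; your argument then goes through verbatim. In short, the paper buys brevity by outsourcing the proof to a textbook, while your version buys a self-contained argument that exposes the hypotheses --- $V(z)=0$ and compactness of the invariant set --- under which the proposition, as the paper actually uses it, is true.
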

\noindent This proposition is a direct consequence of Lyapunov's stability theorem for discrete-time systems, which can be found in \cite{vidyasagar2002nonlinear},
and the definition of domain of attraction.

Finally, we need an assumption on the structure of the $B$ matrix. A square matrix is called {\em irreducible}  if it cannot be permuted to a block upper triangular matrix.
\begin{assumption}
The matrix $B$ is irreducible.
\label{connect}
\end{assumption}
\noindent Note that this assumption is equivalent to the underlying graph being strongly connected. 

\begin{theorem} \label{thm:0global}
Suppose that Assumptions \ref{x0}-\ref{connect} hold for \eqref{eq:disG}. If $s_1(I-hD+hB)\leq 1$, then the healthy state is asymptotically stable with  domain of attraction $[0,1]^n$.
\end{theorem}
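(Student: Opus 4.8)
The plan is to apply the Lyapunov criterion of Proposition~\ref{prop:lya} on the positively invariant set $[0,1]^n$ (Lemma~\ref{lem:box}), using a linear Lyapunov function built from the Perron--Frobenius structure of the linearization. First I would identify the healthy state as the origin $z=\0$, which is readily an equilibrium of \eqref{eq:disG} since the nonlinear term $hX^kBx^k$ vanishes there, and observe that the linearization of \eqref{eq:disG} at $\0$ is $x^{k+1}=Mx^k$ with $M:=I-hD+hB$. By Assumptions~\ref{pos} and \ref{01} the matrix $M$ is entrywise nonnegative (its diagonal entries are $1-h\delta_i+h\beta_{ii}\ge 0$ and its off-diagonal entries are $h\beta_{ij}\ge 0$), and by Assumption~\ref{connect} it has the same irreducible off-diagonal pattern as $B$. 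Hence Perron--Frobenius guarantees that the spectral radius $\rho(M)$ is itself an eigenvalue, so that $s_1(M)=\rho(M)$, and that there is a strictly positive left eigenvector $v\gg\0$ with $v^{\top}M=s_1(M)\,v^{\top}$.

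Next I would take $V(x)=v^{\top}x$. Since $v\gg\0$, we have $V(\0)=0$ and $V(x)>0$ for every $x\in[0,1]^n\setminus\{\0\}$, so $V$ is a valid candidate. Writing \eqref{eq:disG} as $x^{k+1}=Mx^k-hX^kBx^k$ and using $v^{\top}M=s_1(M)v^{\top}$, a one-line computation gives
\begin{equation*}
\Delta V^k=\big(s_1(M)-1\big)\,v^{\top}x^k-h\,v^{\top}X^kBx^k.
\end{equation*}
Both summands are nonpositive on $[0,1]^n$: the first because $s_1(M)\le 1$ and $v^{\top}x^k\ge 0$, and the second because $h>0$, $v\gg\0$, and $X^kBx^k$ is entrywise nonnegative whenever $x^k\in[0,1]^n$ (its $i$th entry is $x_i^k\sum_j\beta_{ij}x_j^k\ge0$). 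Thus $\Delta V^k\le 0$ throughout.

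When $s_1(M)<1$ the first term is strictly negative for every $x^k\in[0,1]^n\setminus\{\0\}$, so $\Delta V^k<0$ there, and Proposition~\ref{prop:lya} immediately yields asymptotic stability of $\0$ with domain of attraction $[0,1]^n$. The delicate---and, I expect, the hardest---case is the boundary $s_1(M)=1$, where the leading term disappears and $\Delta V^k=-h\,v^{\top}X^kBx^k$ can vanish at nonzero states (for instance at a single infected node when $\beta_{ii}=0$), so that the strict decrease required by Proposition~\ref{prop:lya} is lost. To close this case I would pass to a LaSalle-type invariance argument on the compact invariant set $[0,1]^n$: on the set $\{x:\ v^{\top}XBx=0\}$ every term $x_i(Bx)_i$ must vanish, so the dynamics there reduce to the linear map $x^{k+1}=Mx^k$, and I would use the irreducibility of $B$ (strong connectivity) to argue that from any nonzero such state the infection propagates until all components become positive, forcing $(Bx)_i>0$ for all $i$ and hence leaving the zero-set; this would identify $\{\0\}$ as the only invariant subset and give convergence. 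Verifying that this spreading argument genuinely holds under Assumption~\ref{connect}---in particular controlling how healing with $h\delta_i=1$ can empty components while new ones are infected---is the main technical obstacle, and is where the equality in $s_1(M)\le 1$ must be handled with the most care.
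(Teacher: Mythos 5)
Your handling of the strict case $s_1(I-hD+hB)<1$ is correct, complete, and genuinely different from the paper's. The paper uses quadratic diagonal Lyapunov functions $V(x)=x^{\top}Px$ supplied by Lemma~\ref{lem:ndef}, and then must control the cross terms coming from $\hat{M}=M-hX^kB$ through a chain of matrix inequalities. Your linear function $V(x)=v^{\top}x$, with $v\gg\0$ the left Perron eigenvector of $M=I-hD+hB$, reduces everything to the identity $\Delta V^k=(s_1(M)-1)\,v^{\top}x^k-h\,v^{\top}X^kBx^k$, both terms of which are manifestly nonpositive on $[0,1]^n$; together with Lemma~\ref{lem:box} and Proposition~\ref{prop:lya} this settles that case in a few lines, more cleanly and more elementarily than the paper does.

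The genuine gap is the boundary case $s_1(M)=1$, which you leave as an unverified LaSalle sketch. You should know that this gap cannot be filled: the statement is actually false in that case. Take $n=2$, $h=1$, $\delta_1=\delta_2=1$, $\beta_{12}=\beta_{21}=1$, $\beta_{11}=\beta_{22}=0$. Assumptions \ref{x0}--\ref{connect} all hold, $M=B=\bigl[\begin{smallmatrix}0&1\\1&0\end{smallmatrix}\bigr]$, and $s_1(M)=1$, yet \eqref{eq:disG} becomes $x^{k+1}=(I-X^k)Bx^k$, under which $(\epsilon,0)^{\top}\mapsto(0,\epsilon)^{\top}\mapsto(\epsilon,0)^{\top}$. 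Every such point lies on a period-2 orbit, so no trajectory from it converges to $\0$, and $[0,1]^2$ is not a domain of attraction. This orbit sits inside your zero set $\{x:\,v^{\top}XBx=0\}$ (here $v=\1$, and $XBx=\0$ along the orbit precisely because $\beta_{ii}=0$) and is invariant, so the invariance principle terminates at the orbit rather than at $\{\0\}$; the failure mode you flagged---healing with $h\delta_i=1$ emptying one component while the infection hops to another---is exactly what occurs. For what it is worth, the paper's own proof of this case breaks at the same spot: after reaching $\Delta V_2^k\le-h(x^k)^{\top}B^{\top}X^kP_2Mx^k$ it asserts that this quantity vanishes only at $x^k=\0$, which the state $(\epsilon,0)^{\top}$ above refutes, so the strict decrease required by Proposition~\ref{prop:lya} is never established. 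A correct version of the boundary case needs additional hypotheses (for instance $h\delta_i<1$ for all $i$, or $\beta_{ii}>0$, making $M$ primitive) to exclude such periodic orbits; your refusal to claim that case was the right instinct.
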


To prove the theorem, we need the following lemmas. 
\begin{lemma} {\em \cite{rantzer2011positive}}
Suppose that M is an irreducible nonnegative matrix such that $s_1(M) < 1$. Then, there exists a positive diagonal matrix $P$ such that $M^{\top} P M - P$ is negative definite. \label{lem:ndef}
\end{lemma}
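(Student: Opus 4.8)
The plan is to produce the diagonal certificate $P$ explicitly from the Perron--Frobenius data of $M$, by reducing the claim to the classical fact that a Markov transition operator is a contraction in the $\ell^2$ norm weighted by its stationary distribution. First I would record that, since $M$ is nonnegative and irreducible, the Perron--Frobenius theorem guarantees that $\rho := s_1(M)$ is the spectral radius, is attained by a real eigenvalue, and admits a strictly positive right eigenvector $u \gg \0$ with $Mu = \rho u$; the hypothesis $s_1(M) < 1$ thus reads $\rho < 1$, i.e. $M$ is Schur stable. The difficulty is not stability per se (any Schur-stable matrix solves the Stein inequality) but obtaining a solution that is \emph{diagonal}, which is where the nonnegativity and irreducibility must be exploited.

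Next I would use $u$ to regularize the row structure. Writing $U := diag(u)$, the similar matrix $\tilde{M} := U^{-1} M U$ is again nonnegative and irreducible, and each of its row sums equals $(Mu)_i / u_i = \rho$. Hence $S := \tilde{M} / \rho$ is a row-stochastic irreducible matrix, so it has a unique stationary distribution $\pi \gg \0$ with $\pi^{\top} S = \pi^{\top}$. The key estimate is that, for every vector $y$, Jensen's inequality applied rowwise to the convex combination defining $(Sy)_i$ gives $(Sy)_i^2 \leq \sum_j S_{ij} y_j^2$; weighting by $\pi$ and using $\pi^{\top} S = \pi^{\top}$ yields $\sum_i \pi_i (Sy)_i^2 \leq \sum_j \pi_j y_j^2$. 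In matrix form, with $\Pi := diag(\pi) \succ 0$, this is $S^{\top} \Pi S \preceq \Pi$, and therefore $\tilde{M}^{\top} \Pi \tilde{M} = \rho^2 S^{\top} \Pi S \preceq \rho^2 \Pi \prec \Pi$, so that $\tilde{M}^{\top} \Pi \tilde{M} - \Pi \prec 0$.

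Finally I would pull this inequality back to $M$ by a diagonal congruence, which is exactly what keeps the certificate diagonal. Conjugating $\tilde{M}^{\top} \Pi \tilde{M} - \Pi \prec 0$ by the symmetric invertible matrix $U^{-1}$, and using $\tilde{M} U^{-1} = U^{-1} M$ together with $U^{\top} = U$, turns it into $M^{\top} P M - P \prec 0$ with $P := U^{-1} \Pi U^{-1} = diag(\pi_i / u_i^2)$, which is positive and diagonal, as required. I expect the only real obstacle to be precisely this insistence on a diagonal $P$: a generic Schur-stable matrix need not admit one, and the whole argument hinges on the nonnegativity and irreducibility of $M$ entering through the Perron scaling and the stationary measure, so that the contraction estimate is furnished by a diagonal weight $\Pi$ and survives the diagonal congruence by $U^{-1}$. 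The strictness $\rho < 1$ is what upgrades the non-strict Markov contraction $S^{\top}\Pi S \preceq \Pi$ to a strictly negative definite certificate.
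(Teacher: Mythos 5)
Your proof is correct, but note that the paper does not actually prove this lemma: it is imported verbatim from \cite{rantzer2011positive}, so there is no in-paper argument to compare against, and what you have written is a self-contained replacement for that citation. Every step checks out: the Perron scaling $U = diag(u)$ makes $\tilde{M} = U^{-1}MU$ have constant row sums $\rho$, the Jensen/stationarity computation gives the standard weighted-$\ell^2$ contraction $S^{\top}\Pi S \preceq \Pi$ for the row-stochastic $S = \tilde{M}/\rho$, the strict inequality $\rho < 1$ upgrades this to $\tilde{M}^{\top}\Pi\tilde{M} \prec \Pi$, and the diagonal congruence by $U^{-1}$ correctly yields $P = diag(\pi_i/u_i^2)$. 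It is worth observing that your certificate is, after unwinding, exactly the one the paper uses for the companion semidefinite result (Lemma \ref{lem:sndef}, the $s_1(M)=1$ case): the stationarity condition $\pi^{\top}S = \pi^{\top}$ is equivalent to $(U^{-1}\pi)^{\top}M = \rho\,(U^{-1}\pi)^{\top}$, so $w := U^{-1}\pi$ is the left Perron eigenvector of $M$ and $P = diag(w_i/u_i)$, the same left-over-right Perron ratio as the paper's $P = diag(u_i/v_i)$. The difference is in how definiteness is certified: the paper's Lemma \ref{lem:sndef} argument only verifies $(M^{\top}PM - P)v = \0$ and then invokes Lemma 2.3 of \cite{varga} to conclude semidefiniteness, whereas your contraction estimate proves the quadratic-form inequality directly and, with $\rho = 1$, would recover Lemma \ref{lem:sndef} without the appeal to \cite{varga}. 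One small point to patch: dividing by $\rho$ presupposes $\rho > 0$, which for an irreducible nonnegative matrix fails only in the degenerate $1\times 1$ case $M = [\,0\,]$ (any irreducible nonnegative matrix with $n \geq 2$ has $\rho > 0$), and there $P = 1$ works trivially, so a one-line remark closes this gap.
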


\begin{lemma} 
Suppose that M is an irreducible nonnegative matrix such that $s_1(M) = 1$. Then, there exists a positive diagonal matrix $P$ such that $M^{\top} P M - P$ is negative semi-definite. \label{lem:sndef}
\end{lemma}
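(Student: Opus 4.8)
The plan is to derive this boundary (semidefinite) case from the strict case in Lemma~\ref{lem:ndef} by a perturbation-and-limit argument. First I would introduce the scaled matrix $M_\epsilon := (1-\epsilon)M$ for $\epsilon\in(0,1)$. Since $M$ is irreducible and nonnegative with $s_1(M)=1$ (its Perron root, by Perron--Frobenius), $M_\epsilon$ is again irreducible and nonnegative, and $s_1(M_\epsilon)=(1-\epsilon)s_1(M)=1-\epsilon<1$. Lemma~\ref{lem:ndef} then supplies a positive diagonal matrix $P_\epsilon$ with $M_\epsilon^{\top}P_\epsilon M_\epsilon - P_\epsilon \prec 0$, that is, $(1-\epsilon)^2 M^{\top}P_\epsilon M - P_\epsilon \prec 0$.

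Because this inequality is homogeneous of degree one in $P_\epsilon$, I may normalize $\operatorname{tr}(P_\epsilon)=1$, which places each $P_\epsilon$ in the compact simplex of nonnegative diagonal matrices of unit trace. Passing to a convergent subsequence as $\epsilon\to 0$ yields a limit $P_0$ that is diagonal, nonnegative, and nonzero (its trace is $1$), and taking limits in the inequality gives $M^{\top}P_0 M - P_0 \preceq 0$, since the negative-semidefinite condition is closed and $(1-\epsilon)^2\to 1$. Thus $P_0$ already satisfies the desired semidefinite condition; what remains is to upgrade $P_0\succeq 0$ to a genuinely \emph{positive} diagonal matrix.

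This last step is the main obstacle, since a priori some diagonal entries of $P_0$ could vanish. I would rule this out using irreducibility. Partition $[n]$ into $T=\{i:(P_0)_{ii}>0\}$ (nonempty, since $\operatorname{tr}(P_0)=1$) and $S=\{i:(P_0)_{ii}=0\}$. For $i\in S$ the $i$th diagonal entry of $M^{\top}P_0 M - P_0\preceq 0$ equals $\sum_k M_{ki}^2 (P_0)_{kk}\le 0$; as every summand is nonnegative, each must vanish, forcing $M_{ki}=0$ for all $k\in T$ and $i\in S$. Hence the block of $M$ with row indices in $T$ and column indices in $S$ is zero, so after reordering the indices so that $S$ precedes $T$, the matrix $M$ is block upper triangular. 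This contradicts irreducibility unless $S=\emptyset$. Therefore $P_0\succ 0$, and $P=P_0$ proves the lemma.

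The only points needing care are the compactness/normalization argument (the unit-trace normalization is what guarantees the limit $P_0$ is nonzero, and hence $T\neq\emptyset$) and the precise orientation of the zero block in the irreducibility step; everything else is routine continuity and Perron--Frobenius bookkeeping.
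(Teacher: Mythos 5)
Your proof is correct, but it takes a genuinely different route from the paper's. The paper's own argument is a direct construction: by Perron--Frobenius, take a right Perron vector $v \gg 0$ with $Mv = v$ and a left Perron vector $u \gg 0$ with $M^{\top}u = u$, and set $P = \mathrm{diag}(u_i/v_i)$, so that $Pv = u$; then $(M^{\top}PM - P)v = M^{\top}u - u = 0$, and since $M^{\top}PM - P$ is symmetric with nonnegative off-diagonal entries and annihilates a strictly positive vector, a cited lemma from \cite{varga} gives $s_1(M^{\top}PM - P) = 0$, i.e., negative semidefiniteness. You instead treat the strict case (Lemma~\ref{lem:ndef}) as a black box: scale to $M_\epsilon = (1-\epsilon)M$, obtain strict solutions $P_\epsilon$, normalize to unit trace, pass to a convergent subsequence, and then use irreducibility to force the limiting diagonal matrix to be strictly positive. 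Both arguments are sound. The paper's buys brevity and an explicit formula for $P$ in terms of the Perron vectors, at the cost of invoking an external result of Varga; yours is self-contained modulo Lemma~\ref{lem:ndef} (itself cited from \cite{rantzer2011positive}), and your positivity step --- showing that the diagonal entries of $M^{\top}P_0M - P_0$ at indices in $S$ force $M_{ki}=0$ for $k\in T$, $i\in S$, contradicting irreducibility --- is exactly the right way to close the one genuine gap in any such limiting argument, where the limit of positive definite matrices may degenerate. The price you pay is a non-constructive $P$ and the compactness bookkeeping, which you handle correctly via the unit-trace normalization.
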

\begin{proof}
From the Perron Frobenius Theorem for irreducible nonnegative matrices, there exists $v \gg 0$ such that $Mv = v$. Since $M^{\top}$ is also irreducible and nonnegative, there exists $u \gg 0$ such that $M^{\top} u = u$. Let $P$ be a diagonal matrix whose $i$th diagonal entry is equal to $u_i/v_i$, which gives $Pv = u$. Therefore,
\begin{align*}
    (M^{\top} P M - P)v = M^{\top} P v - Pv = M^{\top} u - u = 0.
\end{align*}
Then by Lemma 2.3 in \cite{varga}, $s_1(M^{\top} P M - P) = 0$.
\end{proof}

{\em Proof of Theorem \ref{thm:0global}.}
To simplify notation, let $M = I+hB-hD$ and $\hat{M} = I + h((I-X^k)B-D)$. By Assumptions \ref{pos}-\ref{connect}, $M$ is an irreducible nonnegative matrix. First we evaluate the case where $s_1(I-hD+hB)< 1$. Therefore, by Lemma \ref{lem:ndef}, there exists a positive diagonal matrix $P_1$ such that $M^{\top} P_1 M - P_1$ is negative definite. Consider the Lyapunov function $V_1(x^k) = (x^k)^{\top} P_1 x^k$. 
Using \eqref{eq:disG} with $x^k\neq 0$, gives
\begin{align}
  \Delta V_1^k &= (x^{k})^{\top}\hat{M}^{\top} P_1 (x^{k})^{\top}\hat{M} x^{k} - (x^k)^{\top}P_1 x^k \nonumber \\
  &= (x^{k})^{\top}(M^{\top} P_1 M - P_1) x^{k} - 2h(x^{k})^{\top}B^{\top}X^KP_1 M x^k  + h^2(x^{k})^{\top}B^{\top}X^KP_1 X^K B x^k \nonumber \\ 
  &< h^2(x^{k})^{\top} B^{\top} X^KP_1 X^K B x^k  - 2h(x^{k})^{\top}B^{\top}X^KP_1 M x^k \label{eq:pos} \\ 
  &= h^2(x^{k})^{\top}B^{\top}X^KP_1 X^K B x^k    - 2h^2(x^{k})^{\top}B^{\top}X^KP_1 B^{\top} x^k  - 2h(x^{k})^{\top}B^{\top}X^KP_1 (I-hD) x^k  \nonumber \\ 
  &\leq h^2((x^{k})^{\top}B^{\top}X^KP_1 X^K B x^k   - 2(x^{k})^{\top}B^{\top}X^KP_1 B^{\top} x^k) \label{eq:hD} \\
  &\leq -h^2(x^{k})^{\top}B^{\top}X^KP_1(I - X^K) B x^k \nonumber \\
  &\leq 0, \label{eq:0}
\end{align}
where \eqref{eq:pos} holds by Lemma \ref{lem:ndef}, \eqref{eq:hD} holds by Assumptions \ref{pos} and \ref{01}, and \eqref{eq:0} holds by Lemma \ref{lem:box}. 
Therefore, by Proposition~\ref{prop:lya}, the system converges asymptotically to the healthy state for this case.

For the case where $s_1(I-hD+hB)= 1$, we have, by Lemma \ref{lem:sndef}, that there exists a positive diagonal matrix $P_2$ such that $M^{\top} P_2 M - P_2$ is negative semi-definite. Consider the Lyapunov function $V_2(x^k) = (x^k)^{\top} P_2 x^k$. Using \eqref{eq:disG} with $x^k\neq 0$, gives
\begin{align}
  \Delta V_2^k &= (x^{k})^{\top}\hat{M}^{\top} P_2 (x^{k})^{\top}\hat{M} x^{k} - (x^k)^{\top} P_2 x^k \nonumber \\
  &= (x^{k})^{\top}(M^{\top} P_2 M - P_2) x^{k} - 2h(x^{k})^{\top}B^{\top}X^KP_2 M x^k   + h^2(x^{k})^{\top} B^{\top} X^KP_2 X^K B x^k \nonumber \\ 
  &< h^2(x^{k})^{\top}B^{\top}X^KP_2 X^K B x^k - 2h(x^{k})^{\top}B^{\top}X^KP_2 M x^k \nonumber \\ 
  &= h^2(x^{k})^{\top}B^{\top}X^KP_2 X^K B x^k   - h(x^{k})^{\top}B^{\top}X^KP_2 M x^k \nonumber \\ 
  &  \ \ \ \ \ \ -h^2(x^{k})^{\top}B^{\top}X^KP_2 B x^k  - h(x^{k})^{\top}B^{\top}X^KP_1 (I-hD) x^k  \nonumber \\ 
  &\leq h^2(x^{k})^{\top}B^{\top}X^KP_2 X^K B x^k   - h(x^{k})^{\top}B^{\top}X^KP_2 M x^k  -h^2(x^{k})^{\top}B^{\top}X^KP_2 B x^k  \nonumber \\ 
  &\leq h^2(x^{k})^{\top}B^{\top}X^KP_2 (I - X^K) B x^k   - h(x^{k})^{\top}B^{\top}X^KP_2 M x^k  \nonumber  \\
  &\leq - h(x^{k})^{\top}B^{\top}X^KP_2 M x^k \nonumber \\
  &\leq 0. \nonumber
\end{align}
Clearly if $x^k = \0$, then $- h(x^{k})^{\top}B^{\top}X^KP_2 M x^k = 0$. Since, by Assumptions \ref{pos} and \ref{not0}, $B,P_2,M$ are nonzero, nonnegative matrices, if $- h(x^{k})^{\top}B^{\top}X^KP_2 M x^k = 0$, then $x^k = \0$. Therefore, by Proposition \ref{prop:lya}, the healthy state is asymptotically stable with  domain of attraction $[0,1]^n$.
\hfill
$\qed$

\begin{prop}\label{prop:endemic}
   Suppose that Assumptions \ref{x0}-\ref{connect} hold. If $s_1(I-hD+hB)> 1$, then \eqref{eq:disG} has two equilibria, $\0$ and $x^*$, and $x^*\gg \0$.
\end{prop}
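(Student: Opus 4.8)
The plan is to reduce the question about equilibria of \eqref{eq:disG} to finding strictly positive fixed points of a componentwise, order-preserving, strictly sublinear map, and then to invoke Perron--Frobenius theory for the Metzler matrix $B-D$. First I would characterize the equilibria. Since $h\neq0$ (Assumption \ref{not0}), $x^*$ is an equilibrium of \eqref{eq:disG} iff $((I-X^*)B-D)x^*=\0$, i.e.
\[
(1-x_i^*)\sum_{j=1}^{n}\beta_{ij}x_j^* = \delta_i x_i^*,\qquad i\in[n].
\]
The origin obviously solves this. I would next show that any nonzero equilibrium in $[0,1]^n$ is in fact strictly positive: if $Z=\{i:x_i^*=0\}$ were nonempty and proper, then the equation at each $i\in Z$ forces $\beta_{ij}x_j^*=0$ for all $j$, hence $\beta_{ij}=0$ for every $j\notin Z$; this makes $B$ block-triangular after permutation, contradicting irreducibility (Assumption \ref{connect}). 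Thus $Z=\emptyset$ and every nonzero equilibrium satisfies $x^*\gg\0$. Consequently, establishing existence and uniqueness of a strictly positive equilibrium shows there are exactly two equilibria, $\0$ and $x^*\gg\0$.

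For existence and uniqueness I would rewrite the positive equilibria as fixed points of the map $F$ with $F_i(x)=\tfrac{[Bx]_i}{\delta_i+[Bx]_i}$ (well defined wherever $[Bx]_i>0$, equal to $1$ when $\delta_i=0$). A direct rearrangement shows $(1-x_i)[Bx]_i=\delta_i x_i \iff x_i=F_i(x)$, so on the strictly positive cone the equilibria of \eqref{eq:disG} are precisely the fixed points of $F$. This $F$ is order-preserving (each $g_i(s)=s/(\delta_i+s)$ is increasing and $[Bx]_i$ is nondecreasing in $x$), maps into $[0,1]^n$, and is strictly sublinear in the sense that $F_i(\mu x)<\mu F_i(x)$ for every $\mu>1$ and $x\gg\0$, because $g_i$ is strictly concave with $g_i(0)=0$ (in the degenerate case $\delta_i=0$ one simply has $F_i(\mu x)=1<\mu=\mu F_i(x)$).

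For existence I would manufacture a small subsolution from the Perron eigenvector. Since $M=I+h(B-D)$ has eigenvalues $1+h\mu$ with $\mu\in\mathrm{spec}(B-D)$ and $h>0$, we have $s_1(M)=1+h\,s_1(B-D)$, so the hypothesis $s_1(M)>1$ is exactly $\eta:=s_1(B-D)>0$. Because $B-D$ is an irreducible Metzler matrix, Perron--Frobenius furnishes $v\gg\0$ with $(B-D)v=\eta v$, i.e. $[Bv]_i=(\delta_i+\eta)v_i$. Setting $w=\varepsilon v$ and substituting, the subsolution inequality $F(w)\ge w$, equivalently $(1-w_i)[Bw]_i\ge\delta_i w_i$, reduces to $\eta\ge\varepsilon v_i(\delta_i+\eta)$, which holds for all $i$ once $\varepsilon>0$ is small. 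Monotonicity of $F$ then makes the iterates $w\le F(w)\le F^2(w)\le\cdots\le\1$ increasing and bounded, so they converge to a fixed point $x^*\ge w\gg\0$, which by the first paragraph is an equilibrium with $x^*\gg\0$.

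Finally, uniqueness follows from strict sublinearity via the standard ray argument: if $x,y\gg\0$ are two fixed points, set $\lambda=\max_i x_i/y_i$, attained at some $i_0$, so that $x\le\lambda y$ and $x_{i_0}=\lambda y_{i_0}$. If $\lambda>1$, then monotonicity and strict sublinearity at $i_0$ give
\[
x_{i_0}=F_{i_0}(x)\le F_{i_0}(\lambda y)<\lambda F_{i_0}(y)=\lambda y_{i_0}=x_{i_0},
\]
a contradiction; hence $\lambda\le1$, i.e. $x\le y$, and by symmetry $x=y$. I expect the uniqueness step to be the main obstacle, since it is where the structural hypotheses genuinely enter—irreducibility is what guarantees $[Bx]_i>0$ and hence the \emph{strictness} of sublinearity—and where the bookkeeping must stay valid in the SI regime where some $\delta_i=0$ and $F$ degenerates; a secondary delicacy is translating the spectral hypothesis $s_1(M)>1$ into the positive Perron eigenvector of $B-D$ used to seed the subsolution.
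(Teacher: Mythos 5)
Your proof is correct, but it takes a genuinely more self-contained route than the paper's. The paper's own proof consists essentially of your opening reduction---since $h>0$, the equilibria of \eqref{eq:disG} are exactly the solutions of $((I-X^*)B-D)x^*=\0$, and $s_1(I-hD+hB)>1$ if and only if $s_1(B-D)>0$---after which it simply observes that this is the same equation and spectral condition as in Proposition 3 of \cite{liu2016onthe,arxiv} (the continuous-time model) and imports the existence of $x^*\gg\0$ from there. Everything else you do is material the paper outsources to that citation: the fact that any nonzero equilibrium in $[0,1]^n$ is strictly positive (your block-triangularity contradiction with Assumption \ref{connect}), existence via monotone iteration of $F_i(x)=[Bx]_i/(\delta_i+[Bx]_i)$ starting from the subsolution $\varepsilon v$ built on the Perron eigenvector of the irreducible Metzler matrix $B-D$, and uniqueness via the ray argument from strict sublinearity---the classical Lajmanovich--Yorke-type argument. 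Your bookkeeping is sound: the equivalence of $x=F(x)$ with the equilibrium equation on the positive cone, the reduction of $F(\varepsilon v)\ge \varepsilon v$ to $\eta\ge\varepsilon v_i(\delta_i+\eta)$, and the degenerate SI case $\delta_i=0$ (where $F_i\equiv 1$) are all handled correctly, and irreducibility together with Assumption \ref{not0} does guarantee $[Bx]_i>0$ for $x\gg\0$, which is what makes $F$ well defined and the sublinearity strict. What your route buys is a complete, citation-free justification of the ``exactly two equilibria'' content of the statement, uniqueness of the endemic state included; what the paper's route buys is brevity, by exploiting the fact that fixed points of the Euler discretization coincide with the continuous-time equilibria, so that the earlier continuous-time result applies verbatim.
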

\begin{proof}
  Clearly $\0$ is always an equilibrium of \eqref{eq:disG}.
  
  Note that
  $$ s_1(I-hD+hB) = 1 + h(s_1(-D+B)).$$
  Therefore, 
  $$ s_1(I-hD+hB)> 1 \Longleftrightarrow h(s_1(-D+B))> 0.$$
  This condition is the same as the condition of Proposition 3 in \cite{liu2016onthe,arxiv}, and the proof follows similarly, showing that there exists $x^*\gg \0$ such that 
  $$h((-D+B)-X^*B)x^*=\0.$$
  Therefore, $\0$ and $x^*$ are equilibria of \eqref{eq:disG}.
\end{proof}

From Theorem \ref{thm:0global} and Proposition \ref{prop:endemic}, we have the following result.

\begin{theorem}
Under Assumptions \ref{x0}-\ref{connect}, the healthy state is the unique equilibrium of \eqref{eq:disG} if and only if
 $s_1(I-hD+hB)\leq 1$.
\label{thm:eq0}
\end{theorem}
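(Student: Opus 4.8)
The plan is to deduce the theorem directly from the two preceding results, Theorem~\ref{thm:0global} and Proposition~\ref{prop:endemic}, which together split the parameter regime according to whether $s_1(I-hD+hB)$ is at most $1$ or exceeds $1$. Since both results concern equilibria of \eqref{eq:disG} inside the positively invariant set $[0,1]^n$ (Lemma~\ref{lem:box}), I would read ``unique equilibrium'' as uniqueness within this physically meaningful domain, and state that interpretation explicitly at the outset so the two directions match the hypotheses of the cited results.

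For the sufficiency direction, I would assume $s_1(I-hD+hB)\leq 1$ and invoke Theorem~\ref{thm:0global}, which gives that the healthy state $\0$ is asymptotically stable with domain of attraction $[0,1]^n$; that is, every trajectory initialized in $[0,1]^n$ converges to $\0$. The key observation is then that global attraction forbids any competing equilibrium: if $z\in[0,1]^n$ were an equilibrium with $z\neq\0$, the trajectory with $x^0=z$ would satisfy $x^k=z$ for all $k$, yet by the domain-of-attraction property it must also converge to $\0$, forcing $z=\0$, a contradiction. Hence $\0$ is the only equilibrium in $[0,1]^n$.

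For the necessity direction, I would argue by contraposition. Suppose $s_1(I-hD+hB)>1$. Then Proposition~\ref{prop:endemic} produces an endemic equilibrium $x^*\gg\0$ of \eqref{eq:disG}, distinct from $\0$, so the healthy state is not the unique equilibrium. Taking the contrapositive yields that uniqueness of the healthy state implies $s_1(I-hD+hB)\leq 1$, which combined with the previous paragraph completes the equivalence.

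Each direction is short, and the substantive analytical work has already been carried by the cited theorem and proposition; the only points requiring care are the uniqueness-from-global-attraction step in the sufficiency direction and the implicit restriction of the equilibrium analysis to the invariant domain $[0,1]^n$. In particular, I would make sure that the endemic equilibrium $x^*$ supplied by Proposition~\ref{prop:endemic} indeed lies in $[0,1]^n$, so that it is a genuine competitor equilibrium in the relevant domain; this fact is inherited from the construction underlying that proposition (via the correspondence with Proposition~3 of \cite{liu2016onthe,arxiv}), and I would cite it rather than re-derive it.
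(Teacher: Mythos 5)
Your proposal is correct and follows essentially the same route as the paper: the paper's entire proof is the remark ``From Theorem~\ref{thm:0global} and Proposition~\ref{prop:endemic}, we have the following result,'' and you have simply filled in the implicit details (constant trajectories at a competing equilibrium would contradict the domain-of-attraction property, and the contrapositive via the endemic equilibrium $x^*\gg\0$). Your explicit care about restricting uniqueness to the invariant domain $[0,1]^n$ is a reasonable sharpening of what the paper leaves unstated.
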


In \cite{ahn2013global}, a counterexample is provided to show that the nontrivial equilibrium of \eqref{eq:has} is unstable. However, this example does not hold for the models in \eqref{eq:dis} and \eqref{eq:disG} because it does not meet Assumption \ref{01}. 
Consequently the state of the system does not stay in the domain of interest, $[0,1]^n$.


\begin{remark}
If the system has homogeneous spread parameters, the condition in Theorems \ref{thm:0global}-\ref{thm:eq0} reduces to $s_1(A) \leq \frac{\delta}{\beta}$.
\label{rem:thres}
\end{remark}

\section{Learning Spread Parameters}\label{sec:id}

In this section, we clearly lay out the assumptions and the identification techniques for several versions of the model in \eqref{eq:dis}, introduced in Section \ref{sec:mod}. We assume that the underlying graph structure $A$ is known and that we have full-state measurement with no noise on the measurements, which we admit are strong assumptions. However, for the application considered here these assumptions are well-founded  because we aggregate the data by county and the adjacency of counties is known, i.e., the graph structure is known, and  any farmer that received a subsidy payout is in the dataset, i.e., there are no hidden, unmeasured states. 


We present several results on learning the spread parameters of the model in \eqref{eq:dis} from data. 
\begin{theorem}\label{thm:idhomo}
Consider the model in \eqref{eq:dis} under Assumptions \ref{x0}-\ref{connect} with 
homogeneous virus spread, that is, $\beta$ and $\delta$ are the same for all $n$ agents, 
with $n>1$. 
Assume that $A$, $x^0, \dots, x^T$,  and $h$ are known. Then, the spread parameters can be learned uniquely if and only if $T>0$, and 
there exist $i,j \in [n]$ and $ l_1, l_2\in[T-1] \cup \{0\}$ such that
\begin{align}
x_j^{l_2} g_i(x^{l_1})
\label{eq:inv1}
- 
x_i^{l_1} g_j(x^{l_2}) \neq 0,
\end{align}
where $g(x^{k}):=h(I-X^{k})A x^{k}$.
\end{theorem}
\begin{proof}
Since $A$, $x^0, \dots, x^{T-1}$,  and $h$ are known, using the notation in \eqref{eq:disM} we can construct the matrix 
\begin{equation*}
     \Phi = \begin{bmatrix} h(I-X^0)A x^0 & -hx^0\\ \vdots & \vdots \\ h(I-X^{T-1})A x^{T-1} & -hx^{T-1} \end{bmatrix}.
\end{equation*}
Therefore, since $x^T$ is also known, we can rewrite \eqref{eq:dis} as 
\begin{equation}\label{eq:id1}
    \begin{bmatrix} x^1-x^0\\ \vdots \\ x^T-x^{T-1} \end{bmatrix} =  \Phi \begin{bmatrix} \beta \\ \delta \end{bmatrix}.
\end{equation}
Since $n>1$, $\Phi$ has at least two rows. 
By the assumption that there exist $i,j \in [n]$ and  $ l_1, l_2\in[T-1] \cup \{0\}$ such that 
\eqref{eq:inv1} holds, 
$\Phi$ has 
column rank equal to two, with two unknowns. 
Therefore there exists a unique solution to \eqref{eq:id1} using the inverse or pseudoinverse.

If there do not exist $i,j \in [n]$ and  $ l_1, l_2\in[T-1] \cup \{0\}$ such that \eqref{eq:inv1} holds, then $\Phi$ has a nontrivial nullspace. Therefore \eqref{eq:id2} does not have a unique solution. 
\end{proof}
Now we present two corollaries where the ratio of the spread parameters, $\delta/\beta$, can be recovered. 
\begin{corollary}\label{cor:h}
 Consider the model in \eqref{eq:dis} under Assumptions \ref{x0}-\ref{connect} 
 with homogeneous virus spread 
with $n>1$.  
    Assume that $A$ and $x^0, \dots, x^T$ are known. Then, the ratio of the spread parameters can be learned uniquely if and only if $T>0$ and
there exist $i,j \in [n]$ and $ l_1, l_2\in[T-1] \cup \{0\}$ such that
\begin{align*}
x_j^{l_2} g_i(x^{l_1})
- 
x_i^{l_1} g_j(x^{l_2})\neq 0.
\end{align*}
\end{corollary}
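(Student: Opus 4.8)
The plan is to mirror the proof of Theorem~\ref{thm:idhomo}, with one structural change forced by the fact that $h$ is no longer known: the scalars $\beta$ and $\delta$ can no longer be separated, and only the products $h\beta$ and $h\delta$ can enter the regression. This is harmless for the present goal, since the target ratio satisfies $\delta/\beta=(h\delta)/(h\beta)$ and hence depends only on those two products. Concretely I would define $\tilde{\Phi}$ exactly as $\Phi$ in \eqref{eq:phi} but with the scalar $h$ stripped from every block, so that the stacked state-difference vector on the left of \eqref{eq:id1} equals $\tilde{\Phi}\,[h\beta\ \ h\delta]^\top$, with $\tilde{\Phi}$ fully known from $A$ and $x^0,\dots,x^T$.

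For sufficiency, suppose $T>0$ and $x^l\neq x^0$ for some $l$. Then the left-hand side is nonzero and, exactly as for $\Phi$ in Theorem~\ref{thm:idhomo}, the system is overdetermined with $n>1$ and $\tilde{\Phi}$ has full column rank, so the pseudo-inverse returns a unique $[h\beta\ \ h\delta]^\top$. Dividing the second coordinate by the first yields $\delta/\beta$; this quotient is well defined since $h\beta>0$ by Assumptions~\ref{01} and~\ref{not0} (homogeneity with nontrivial spread forces $\beta>0$, and $h\neq0$). Hence the ratio is recovered uniquely.

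For necessity I would argue the contrapositive. If the condition fails, then either $T=0$, so no difference equation can be formed, or $x^0=\cdots=x^T$, so $x^0$ is a fixed point, and I would split this via Proposition~\ref{prop:endemic}. In the healthy case $x^0=x^*=\0$ one gets $\tilde{\Phi}=0$, so every pair $[h\beta\ \ h\delta]^\top$, and therefore every ratio, is consistent with the data and the ratio is not unique. I expect the \emph{main obstacle} to be the endemic sub-case $x^0=x^*\gg\0$: here $\tilde{\Phi}\neq0$ and $[h\beta\ \ h\delta]^\top$ lies in its null space, but that null space is one-dimensional and the ratio of its two coordinates is constant along it, so the equilibrium data in fact pins $\delta/\beta$ down to its true value. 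Thus the ``lies in the null space $\Rightarrow$ not unique'' step that closes Theorem~\ref{thm:idhomo} does not carry over to the ratio, and this is where the argument needs the most care: I would resolve it by recognizing that the genuine obstruction to recovering $\delta/\beta$ is the healthy fixed point, and either qualifying the necessity claim accordingly or supplying a separate argument that the endemic fixed point cannot be the only observed trajectory under the standing data-generating assumptions.
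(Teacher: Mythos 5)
Your ``if'' direction is exactly the paper's proof: the paper's entire argument for this corollary is the observation that $h$ factors out of the right-hand side of \eqref{eq:id1}, so the data determine the scaled pair $[h\beta \ \ h\delta]^{\top}$ by the same regression as in Theorem~\ref{thm:idhomo}, and the ratio $\delta/\beta=(h\delta)/(h\beta)$ follows. (Like the paper, you pass from ``overdetermined'' to ``full column rank'' without argument; that gap is inherited from the proof of Theorem~\ref{thm:idhomo}, not introduced by you.)

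Where you depart from the paper is the ``only if'' direction, and there you are in fact more careful than the paper, which never addresses necessity for the \emph{ratio} at all and silently lets it ride on Theorem~\ref{thm:idhomo}. The obstacle you flag is genuine. In the endemic sub-case $x^0=\cdots=x^T=x^*\gg\0$, the regressor matrix is nonzero (its second column is the stacked vector $-x^*\neq\0$) while the nonzero true pair lies in its null space, so that null space is exactly one-dimensional; every nonzero data-consistent pair is then a scalar multiple of $[\beta \ \ \delta]^{\top}$ (with $\beta>0$ by Assumption~\ref{not0} and homogeneity), and the ratio is pinned down. The paper itself corroborates this: Corollary~\ref{cor:xs} is precisely the statement that the endemic state determines $\delta_i/\beta_i$ in closed form. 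So the necessity claim, read as identifiability of the ratio, fails in this sub-case; the null-space argument from Theorem~\ref{thm:idhomo} shows non-uniqueness of the \emph{pair}, not of the ratio. Of your two proposed resolutions, qualifying the statement is the correct one --- for instance, replacing the condition ``$\exists\, l$ such that $x^l\neq x^0$'' by ``$x^0\neq\0$'': non-equilibrium trajectories yield the scaled pair as above, endemic equilibrium data yield the ratio via Corollary~\ref{cor:xs}, and only $x^0=\0$ or $T=0$ leave the ratio undetermined. Your alternative resolution --- ruling out the endemic fixed point as an observable trajectory --- cannot work, since $x^0=x^*$ is admissible under Assumption~\ref{x0} and the trajectory then remains there for all $k$.
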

\begin{proof}
Since $h$ factors out of the right hand side of \eqref{eq:id1} and is nonzero by Assumption \ref{not0}, even if $h$ is not known, a scaled version of the pair $\beta$ and $\delta$ can be recovered exactly. Therefore, the proportion of the two parameters can be found.
\end{proof}
\begin{corollary}\label{cor:xs}
Considering the model in \eqref{eq:dis} under Assumptions \ref{x0}-\ref{connect}, if the endemic state, $x^*\gg 0$, exists, then 
\begin{equation}
\frac{\delta_i}{\beta_i} = \frac{(1-x_{i}^*)}{x_{i}^*} \sum^{n}_{j=1} a_{ij}x_{j}^*.   
\end{equation}
\end{corollary}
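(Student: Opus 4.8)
The plan is to exploit the fact that, by definition, the endemic state $x^*$ is an equilibrium of the dynamics \eqref{eq:dis}, so the stated identity should fall out directly from the fixed-point condition with no appeal to stability or to the spectral machinery of the earlier results. First I would impose $x^{k+1}_i = x^k_i = x^*_i$ in the scalar update \eqref{eq:dis}. Since that map advances each coordinate by the bracketed increment scaled by $h$, stationarity forces the increment to vanish, giving
$$ h\left((1-x^*_i)\beta_i \sum^{n}_{j=1} a_{ij} x^*_j - \delta_i x^*_i\right) = 0 $$
for every $i\in[n]$.

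Next I would cancel the common factor $h$, which is legitimate because $h\neq 0$ by Assumption \ref{not0}, leaving the balance equation $(1-x^*_i)\beta_i \sum^{n}_{j=1} a_{ij} x^*_j = \delta_i x^*_i$. Putting this into the claimed form then only requires dividing both sides by $\beta_i x^*_i$, after which the identity $\frac{\delta_i}{\beta_i} = \frac{(1-x^*_i)}{x^*_i}\sum^{n}_{j=1} a_{ij} x^*_j$ is immediate. The remainder is pure algebra.

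The only point that needs care --- and the closest thing to an obstacle --- is justifying that the two denominators do not vanish. The factor $x^*_i$ is strictly positive for each $i$ precisely because $x^*\gg \0$ is assumed; this is exactly why the corollary restricts attention to a strictly positive (rather than merely nonnegative) endemic state, so that the pointwise division is well posed. The factor $\beta_i$ must likewise be nonzero, which I would argue is the natural regime for the statement: if $\beta_i=0$ the balance equation collapses to $\delta_i x^*_i = 0$, and with $x^*_i>0$ this would force $\delta_i=0$ as well, so that the ratio $\delta_i/\beta_i$ itself is meaningless. Hence the formula is valid exactly on the agents possessing a genuine positive infection rate, and for those agents the two cancellation steps above complete the derivation. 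I do not anticipate any further difficulty.
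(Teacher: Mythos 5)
Your proposal is correct and matches the paper's proof, which likewise just substitutes $x_i^*$ for both $x_i^{k+1}$ and $x_i^k$ in \eqref{eq:dis} and solves the resulting fixed-point equation. Your extra care about the nonvanishing of $h$, $x_i^*$, and $\beta_i$ is sound (and $\beta_i>0$ is already guaranteed by the model definition in \eqref{eq:cont}), so nothing is missing.
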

\begin{proof}
This follows from solving \eqref{eq:dis}, using $x_i^*>0$ in place of $x_{i}^{k+1}$ and $x_{i}^{k}$.
\end{proof}
These corollaries illustrate that under certain conditions, while the exact behavior of the system may not be recoverable, 
the limiting behavior of the system may be determined, by employing Theorems \ref{thm:0global}-\ref{thm:eq0} with Remark \ref{rem:thres}.

If the assumption is made that the underlying spread process is heterogeneous, a similar result to Theorem \ref{thm:idhomo} can be concluded. 
\begin{theorem}
Consider the model in \eqref{eq:dis} under Assumptions \ref{x0}-\ref{connect} with $n>1$. 
Assume that $A$, $x^0, \dots, x^{T-1}$, $x_i^{T}$,  and $h$ are known. Then, the spread parameters of node $i$ can be learned uniquely if and only if $T>1$, and 
there exist $ l_1, l_2\in[T-1] \cup \{0\}$ such that
\begin{align}\label{eq:inv2}
x_i^{l_2} (1-x_{i}^{l_1})\sum^{n}_{j=1} a_{ij}x_{j}^{l_1} 
-  x_i^{l_1} &(1-x_{i}^{l_2})\sum^{n}_{j=1} a_{ij}x_{j}^{l_2}
\neq 0.
\end{align}
\end{theorem}
\begin{proof}
Since $A$, $x^0, \dots, x^{T-1}$,  and $h$ are known, 
we can construct the matrix 
\begin{equation*}
     \Phi_i = \begin{bmatrix} \displaystyle h(1-x_{i}^{0}) \sum^{n}_{j=1} a_{ij}x_{j}^{0}  & \displaystyle -hx_{i}^0\\ \vdots & \vdots \\ \displaystyle h(1-x_{i}^{T-1}) \sum^{n}_{j=1} a_{ij}x_{j}^{T-1}  & \displaystyle -hx_{i}^{T-1} \end{bmatrix}.
\end{equation*}
Then, since $x_i^{T}$ is known, we have 
\begin{equation}\label{eq:id2}
    \begin{bmatrix} \displaystyle x_{i}^1-x_{i}^0\\ \vdots \\ \displaystyle x_{i}^T-x_{i}^{T-1} \end{bmatrix} =  \Phi_i \begin{bmatrix} \beta_{i} \\ \delta_{i} \end{bmatrix}.
\end{equation}
Since $T>1$, $\Phi_i$ has at least two rows. 
By the assumption that there exist $ l_1, l_2\in[T-1] \cup \{0\}$ such that 
\eqref{eq:inv2} holds, 
$\Phi_i$ has column rank equal to two, with two unknowns. 
%
Therefore there exists a unique solution to \eqref{eq:id2} using the inverse or pseudoinverse.

If there do not exist $ l_1, l_2\in[T-1] \cup \{0\}$ such that \eqref{eq:inv2} holds, then $\Phi_i$ has a nontrivial nullspace. Therefore, in that case, \eqref{eq:id2} does not have a unique solution. 
\end{proof}
\normalsize

\noindent Identifying heterogeneous spread parameters, however interesting, will not help identify the spread in other areas. Therefore, a homogeneous system should be more informative for some applications. 
For the Snow dataset in Section \ref{sec:snow}, we will employ the heterogeneous approach, using Corollary~\ref{cor:xs} and assuming $\beta_i = 1$ for all $i$.  
We will employ homogeneous 
formulation on the USDA dataset in Section~\ref{sec:usda}. 





\section{Simulations}\label{sec:sim}

In this section, we present a  simulation that implements the results of the previous section. 
While the data used in this section is generated in Matlab, the insights gained from the exercises here contribute towards our approach using the USDA dataset in Section~\ref{sec:usda}. 



\begin{figure}
    \centering
    \begin{subfigure}[b]{.493\columnwidth}
      \includegraphics[width=\columnwidth]{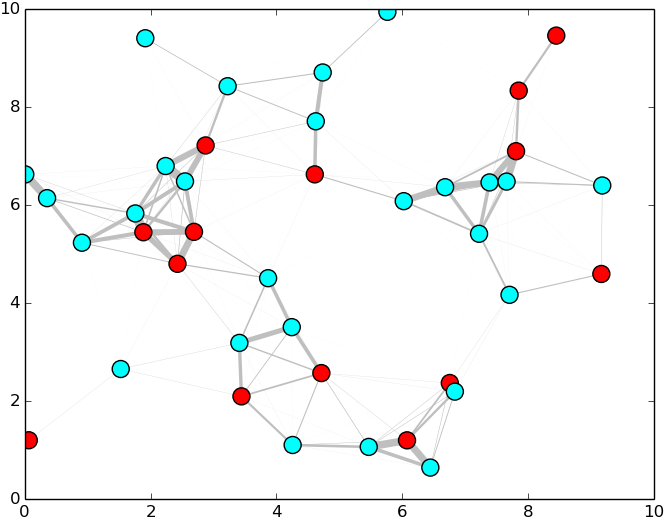}
      \caption{The system at time zero.}
      \label{fig:dis0}
    \end{subfigure}
    \hfill
    \begin{subfigure}[b]{.493\columnwidth}
      \includegraphics[width=\columnwidth]{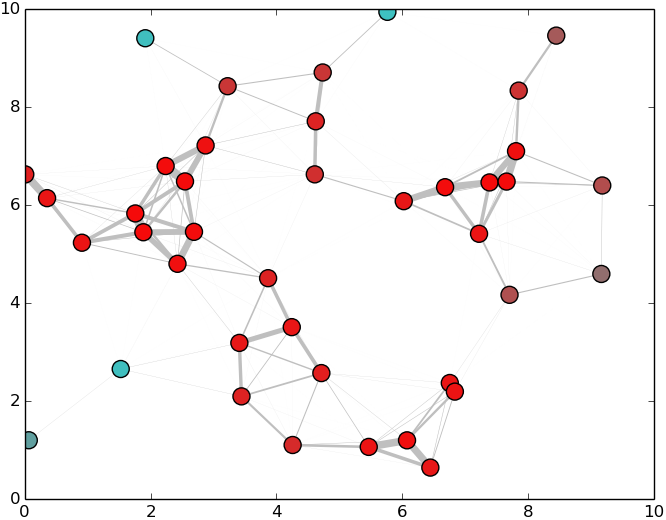}
      \caption{The system at time 100.}
      \label{fig:dis100}
    \end{subfigure}
    \caption{This virus system follows \eqref{eq:dis} with $\beta =1$, $\delta = .1$, $h=.1$, and $A$ depicted by the edges.  
    Teal indicates healthy or susceptible, while red indicates infected. 
For a video of this simulation please see \href{http://youtu.be/JhU1mEvlV-g}{youtu.be/JhU1mEvlV-g}.}
\label{fig:dis}
\end{figure}
Consider a system with 40 agents, with a random set of initially infected agents, with $\beta =1$, $\delta = .1$, $h=.1$ and the weighting matrix $A$ determined by the agents' relative positions given by $z_i$, that is, for radius $r=2$ and $i\neq j$,
\begin{equation}\label{eq:at}
a_{ij}(t) = \begin{cases}
    e^{-\|z_i - z_j\|^2}, & \text{if } \|z_i - z_j\| < r \\
    0,              & \text{otherwise}.
\end{cases}
\end{equation}
See Figure \ref{fig:dis} for plots of the initial and final conditions. Assuming that the correct value for $h$ and the $A$ matrix are known, using \eqref{eq:id1} exactly recovers $\beta$ and $\delta$. 
If only two time-steps are used, the exact spread parameters can be recovered, consistent with Theorem \ref{thm:idhomo}. 
Using \eqref{eq:id1} with an incorrect $h$ value to recover $\beta$ and $\delta$ gives incorrect values for $\beta$ and $\delta$, but results in the right proportion between the two, consistent with Corollary \ref{cor:h}. 
If the system is at the endemic state, the proportion between the spread parameters can be solved exactly using Corollary~\ref{cor:xs}.

\section{Validation: Snow Dataset} \label{sec:snow}

Now we employ the seminal cholera dataset collected by John Snow \cite{snow1855mode} for validation of the model in \eqref{eq:dis}.

\begin{figure}
\centering
    \includegraphics[width = .75\columnwidth]{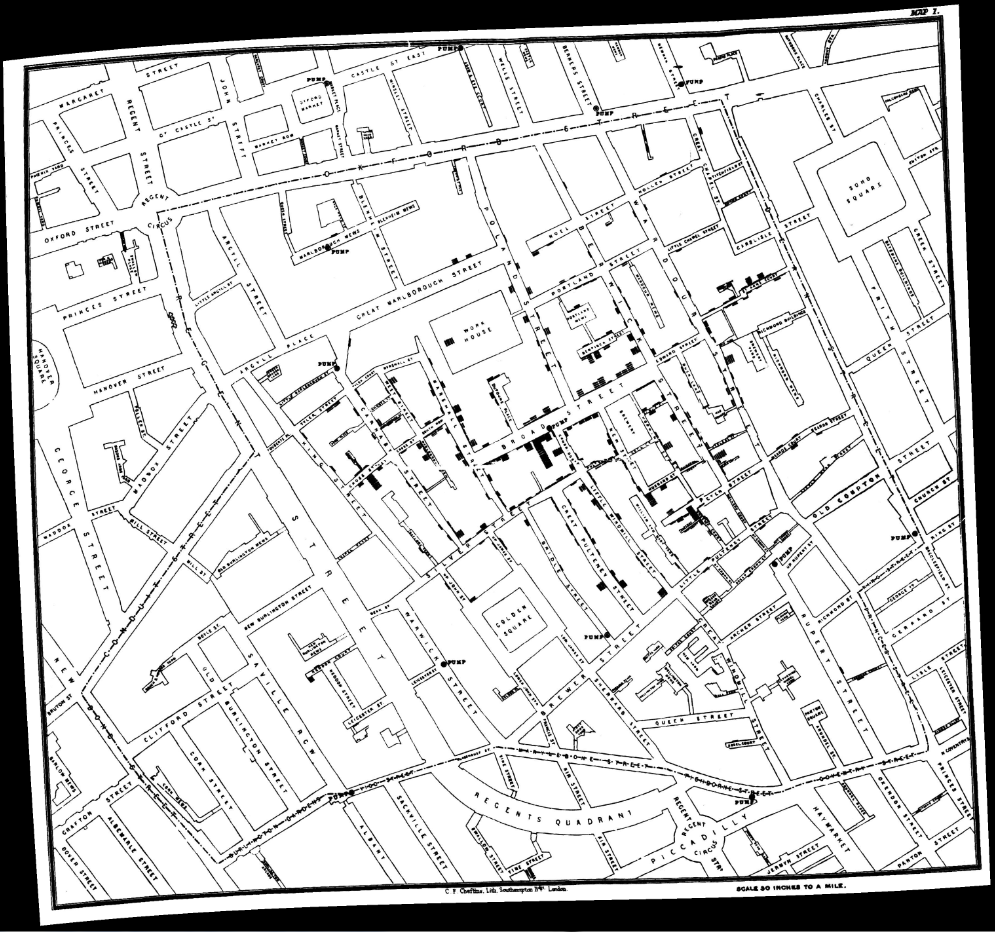}
    \caption{This is the map of cholera spread in London in 1854 compiled by John Snow \cite{snow1855mode}.}\label{fig:snow}
\end{figure}

\begin{figure}
\centering
    \includegraphics[width = .75\columnwidth]{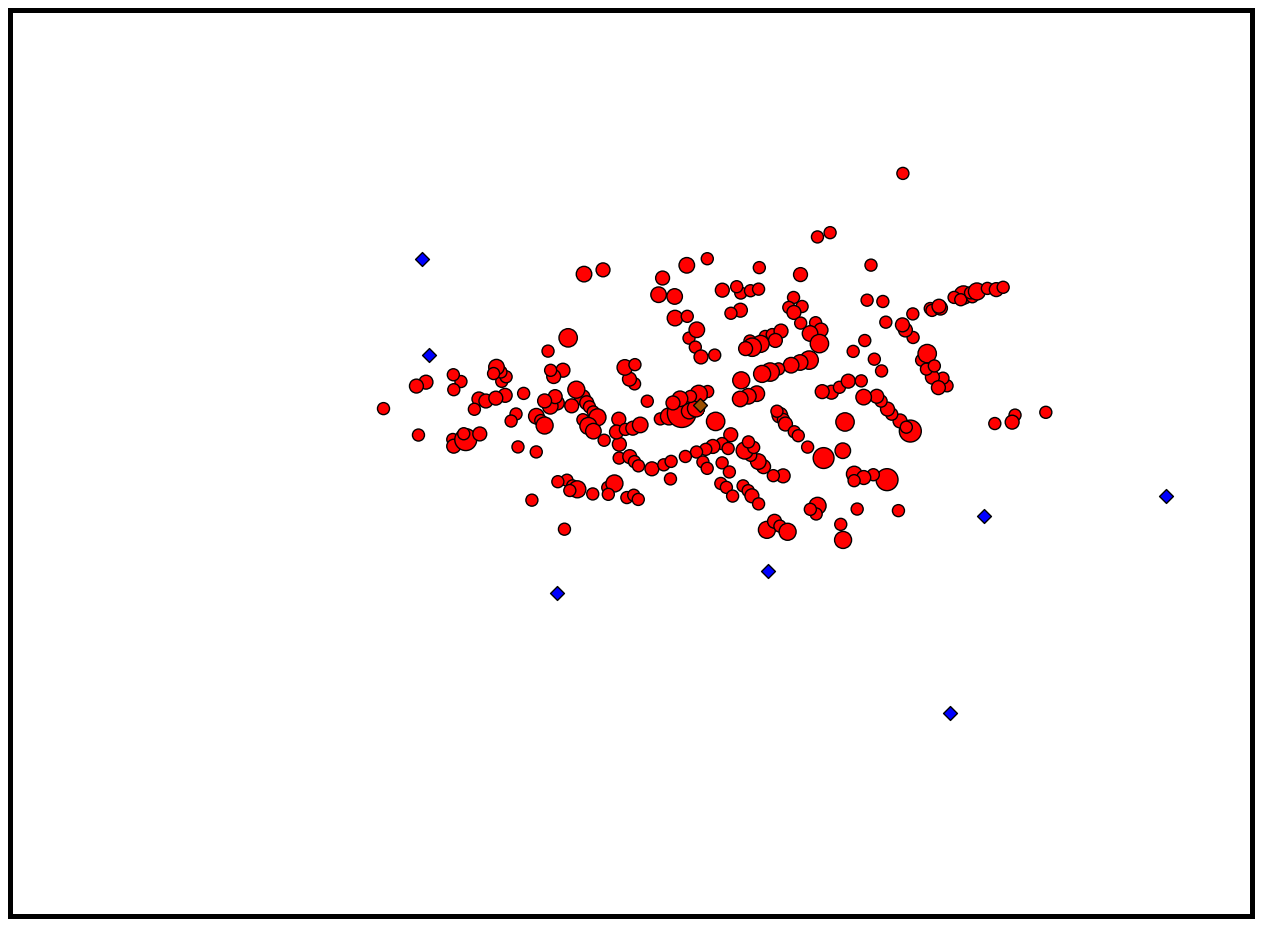}
    \caption{This is a digitization of the data from Figure \ref{fig:snow}. The blue diamonds indicate healthy water pumps, the brown diamond indicates the contaminated Broad Street pump, and the red dots indicate deaths with the diameter scaled by the number of deaths.}\label{fig:snowdata}
\end{figure}

\subsection{Snow Dataset} \label{subsec:snow}

Snow depicted the total number of deaths caused by cholera in the Soho District of London in 1854 on a map of the area. In Figure \ref{fig:snow}, the original map is shown, where each small rectangle corresponds to one death at that address. Snow  created this map to illustrate to officials that the cholera epidemic was being spread by infected water from the Broad Street pump, and not through  the air, as was the common belief of those times. We have plotted this data in Figure \ref{fig:snowdata}, with diamonds indicating a water pump and red dots indicating deaths. Snow also documented the cumulative deaths per day in Table I of \cite{snow1855mode}, plotted in Figure \ref{fig:deaths}. The time of deaths for each address is not recorded. Note that the total cumulative deaths is 616, but the total number of deaths on the map are 489. Therefore, there is a discrepancy of 127 deaths, whose household addresses are not included in the map. 
For validation of the model in \eqref{eq:dis} we use the number of deaths as the metric for the disease spread.

\begin{figure}
    \includegraphics[width=\columnwidth]{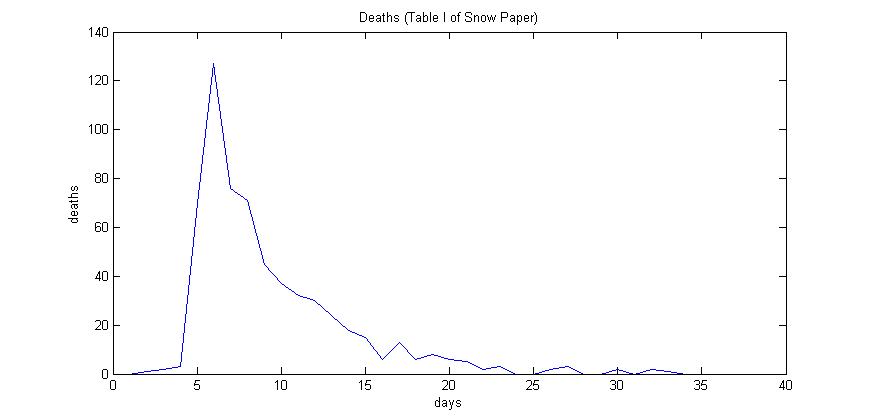}
    \caption{Deaths per day in the Soho District of London in 1854 compiled by John Snow \cite{snow1855mode}. }
    \label{fig:deaths}
\end{figure}

\subsection{Spread Validation}

For the validation, each household with a death recorded by Snow in the map in Figure \ref{fig:snow} corresponds to a  node in the model. The last node in the model corresponds to the contaminated pump, the one on Broad Street, and we do not include the healthy water pumps in the model. We realize that ignoring the households with no recorded deaths and ignoring the healthy pumps are nontrivial assumptions. However, as was noted by Snow, many residents fled the city once they became aware of the outbreak \cite{snow1855mode}. For the households that did not flee, we assume they either had such a high healing rate that their inclusion would have been trivial and/or that these households exclusively drank from another pump and did not closely associate with neighbors who did drink from the Broad Street pump. Despite these (and subsequent) relatively strong assumptions the validation results are quite promising. 

The state of the system, $x^k$, is the percentage of total deaths in each household up to time $k$. The epidemic equilibrium of the system, which we call $x^*$, was calculated from the data in Figure \ref{fig:snowdata}, for the first attempts, by dividing the total number of deaths in each household by 20, and therefore assuming that each household has 20 members. 
This number was chosen because the maximum number of deaths was 15. 
For the third attempt we approximated the household sizes using Figure 1 in \cite{shiode2015mortality}. 
The last element of $x^*$, corresponding to the pump was set to $\frac{19}{20}$. 
Then, assuming $\beta_i = 1$ for all $i$, we employed Corollary \ref{cor:xs} to calculate the $\delta_i$ values. 
Recall the Broad Street pump corresponds to the last agent in the model (agent $n$). For the initial condition in the simulations, we began with the Broad Street pump infected and all the households healthy:
\begin{equation}
x^0 = \begin{bmatrix}
0 & \hdots & 0 & 1   
\end{bmatrix}^{\top}.
\end{equation}
This initial condition is shown in Figure \ref{fig:snowx0}, where the contaminated pump is depicted as a brown diamond. 
As a consequence of these assumptions, our tuning parameter for adjusting the learned $\delta_i$ parameters, and consequently the spread behavior, was the connectivity matrix $A$. 

\begin{figure}
\centering
\begin{subfigure}{\linewidth}
\centering
    \includegraphics[width = .75\columnwidth]{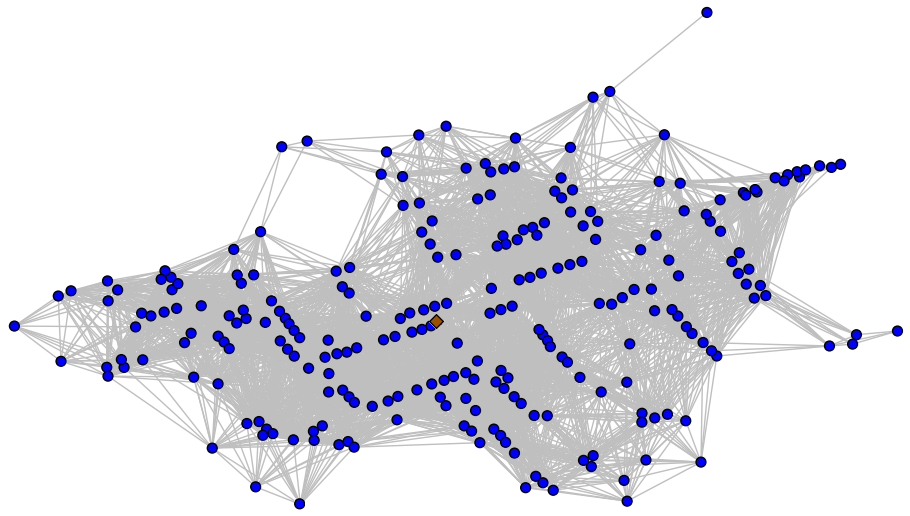}
    \caption{Initial condition of simulations: blue circles indicate healthy households and the brown diamond indicates the infected pump.}
    \label{fig:snowx0}
    \end{subfigure}
    \begin{subfigure}{\linewidth}
    \centering
    \includegraphics[width = .75\columnwidth]{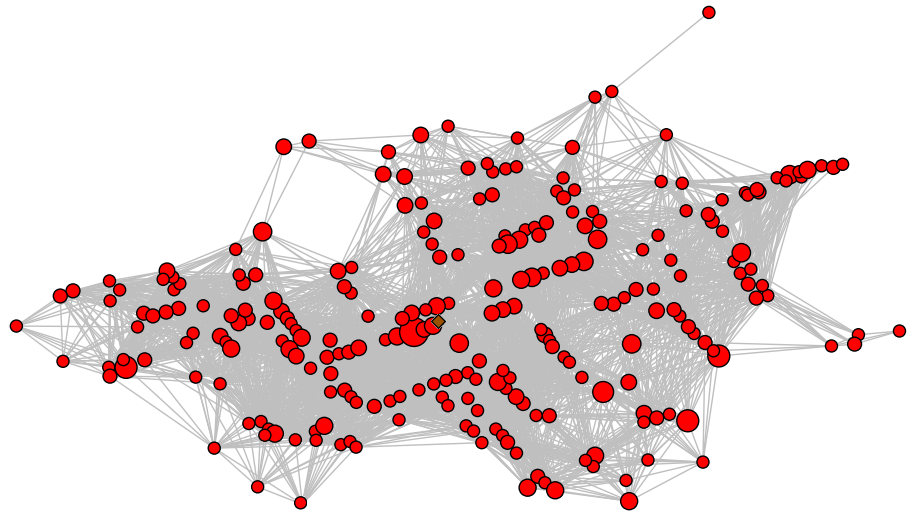}
    \caption{Final state, the epidemic equilibrium, or endemic state of the system: the diameters of the nodes scale with the number of deaths in that household.}
    \end{subfigure}
    \caption{These two plots show the initial and final states of the simulations. The connectivity here corresponds to $A^1$ from~\eqref{eq:Abad}. }\label{fig:vid}
\end{figure}

\begin{figure}
    \includegraphics[width=\columnwidth]{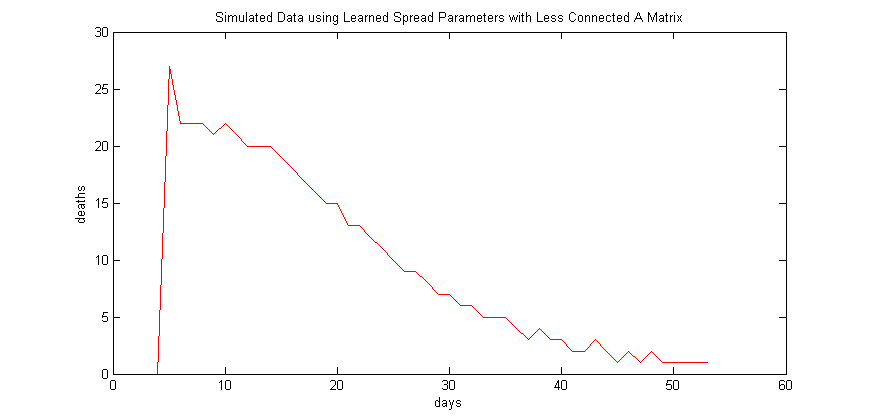}
    \caption{Simulated data using the learned parameters from the data in Figure \ref{fig:snowdata}, employing Corollary \ref{cor:xs} and $A^1$ from~\eqref{eq:Abad}.}
    \label{fig:deaths_sim_bad}
\end{figure}

For the first attempt, we designed $A^1$ such that
\begin{equation}
a^1_{ij} = \begin{cases}
    1, & \text{ if }\|z_i - z_j\| < r,\\
    1, &\text{ if } i=j\text{,} \\
    0, & \text{ otherwise,}
    \end{cases}
\label{eq:Abad}
\end{equation}
where $z_i$ is the location of household $i$ and $r$ was chosen such that the graph was connected. The graph imposed by the $A^1$ matrix is shown in Figure \ref{fig:vid}. Using the $\delta_i$ parameters derived from $A^1$, we simulated the system, using \eqref{eq:dis}. 
To meet the constraints of Assumption \ref{01}, we had to set $h = \frac{1}{175}$. 
This simulation resulted in the distribution of deaths shown in Figure \ref{fig:deaths_sim_bad}; this plot was created by multiplying the state of the system, percentage of deaths in each household up to that point, by the household sizes, assumed to be 20, rounding to the nearest integer, taking the difference between the states of each time step (since the state represents cumulative number of deaths up to that point),
and then summing up every three time series points (due to the small $h$ value), therefore assuming that each time series point corresponds to a third of a day. Note that the shape is very different than the dataset, shown in Figure \ref{fig:deaths}. 

\begin{figure}
    \includegraphics[width=\columnwidth]{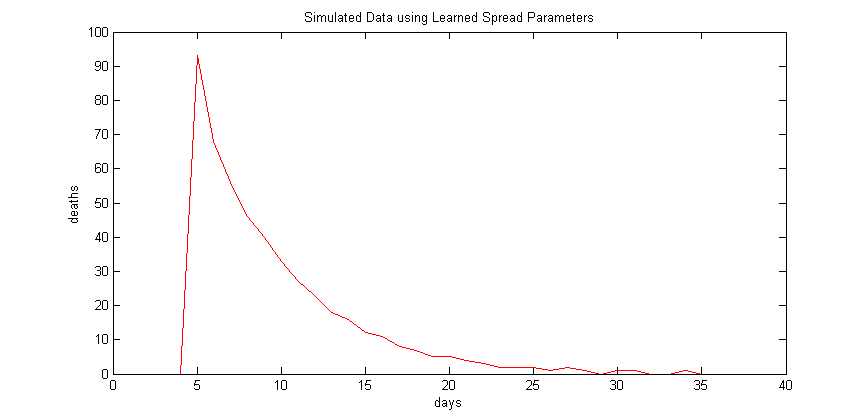}
    \caption{Simulated data using the learned parameters from the data in Figure \ref{fig:snowdata}, employing Corollary \ref{cor:xs} and $A^2$ from~\eqref{eq:Apump}.}
    \label{fig:deaths_sim}
\end{figure}

For the second attempt, since it is well known (now) that cholera spreads primarily through contaminated water, and we know that the Broad Street pump was the source of this epidemic, we allowed the pump to affect everyone. This was done by setting
\begin{equation}
A^2 = [ \ A^1(1:n,1:n-1)  \ v \ ],
\label{eq:Apump}
\end{equation}
where $v= \1 \in \R^n$ and the notation $A^1(1:n,1:n-1)$ indicates all of the $A^1$ matrix except the last column. 
Using the $\delta_i$ parameters derived from $A^2$, we simulated the system, again setting $h = \frac{1}{175}$. The resulting distribution of deaths is shown in Figure \ref{fig:deaths_sim} (created similarly to Figure \ref{fig:deaths_sim_bad}). 
Note that the shape is very similar to the original dataset from \cite{snow1855mode}, shown in Figure \ref{fig:deaths}, capturing the behavior of the true epidemic. 

Plotting the distributions from Figures \ref{fig:deaths} and \ref{fig:deaths_sim} on the same plot for comparison in Figure \ref{fig:comp} shows that they are not identical. This difference results from the fact, noted in Section \ref{subsec:snow}, that the total number of deaths in the map (Figure \ref{fig:snow}), 
used to derive $x^*$ and consequently the spread parameters and the simulation, is 489, and the total number of deaths in Table I of \cite{snow1855mode}, 
used to create the distribution of deaths over days in Figure~\ref{fig:deaths}, is 616. Therefore, the lack address information of the additional 127 deaths results in this inaccuracy. 
However, the largest discrepancy occurs near the peak of the epidemic, when people were arriving at hospitals too sick to provide their addresses \cite{snow1855mode}. Nevertheless, the results are very promising showing that the model in \eqref{eq:dis} captures the behavior of the cholera epidemic from John Snow's 1854 dataset quite well. 

\begin{figure}
    \includegraphics[width=\columnwidth]{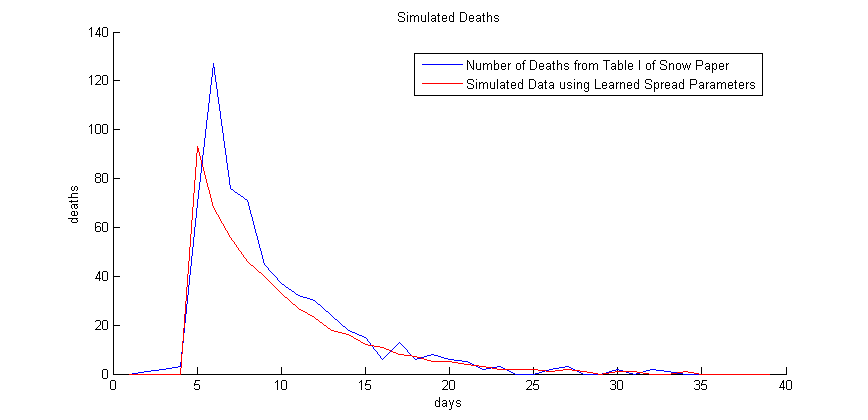}
    \caption{Comparison of Figures \ref{fig:deaths} and \ref{fig:deaths_sim}: Note that there is a difference in the magnitude, but the general shapes are very similar. This discrepancy is due to the fact that we used the spatial dataset in Figures \ref{fig:snow}-\ref{fig:snowdata}, which had only 489 documented deaths, while the cumulative data from Table I in \cite{snow1855mode}, shown in Figure \ref{fig:deaths} and the blue in this plot, has a total of 616 deaths. The difference of 127 has caused the discrepancy.}
    \label{fig:comp}
\end{figure}

For the third attempt, we changed to heterogeneous household sizes, using Figure 1 in \cite{shiode2015mortality} to approximate these values. We also removed all edges except the self loops and the binary directed edges from the pump to every household with at least one death. The connection from the pump to the workhouse was set to $\frac{1}{10}$ (corresponding to the 208th index) because they had their own well and only a small fraction of the 403 residents drank from the Broad Street pump \cite{snow1855mode}. 
Therefore 
\begin{equation}
A_3 = \begin{bmatrix}
1 & 0 &\hdots & 0 & 1 \\
0 & 1 &\hdots & 0 & \vdots \\
0 & 0 &\ddots & 0 & \frac{1}{10} \\\vspace{-.2ex}
0 & 0 &\hdots & 1 & \vdots \\
0 & 0 &\hdots & 0 & 1
\end{bmatrix}.\label{eq:A3}
\end{equation}
After deriving the $\delta_i$ values using Corollary \ref{cor:xs}, we were able to simulate the system using \eqref{eq:dis} with $h=\frac{1}{30}$. The distribution of the deaths is shown in Figure \ref{fig:deaths_sim_best}. As a result of the larger $h$ value, no aggregation of the data was required; the plot shows the complete dataset (with several padded zeros at the beginning and one at the end). 
For completeness, we include 
a link to a video of this simulation in the caption of Figure~\ref{fig:deaths_sim_best}. 

\begin{figure}
    \includegraphics[width=\columnwidth]{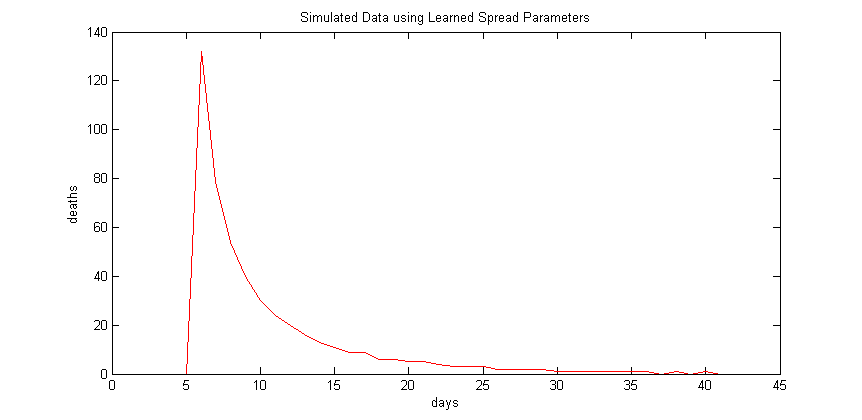}
    \caption{Simulated data using the learned parameters from the data in Figure \ref{fig:snowdata}, employing Corollary \ref{cor:xs} and $A^3$ from~\eqref{eq:A3}. A video of the spread of the simulation can be found at \href{https://youtu.be/oAljxVzyE5U}{youtu.be/oAljxVzyE5U}}.
    \label{fig:deaths_sim_best}
\end{figure}

Plotting the distributions from Figures \ref{fig:deaths} and \ref{fig:deaths_sim_best} on the same plot for comparison in Figure \ref{fig:comp_best} shows that we capture the behavior of the outbreak quite well. 
The lack of the address information for the additional 127 deaths results in the plots not being identical. However, the discrepancy is distributed fairly evenly across all the whole sample time. Consequently, we have shown that the model in \eqref{eq:dis} captures the behavior of the cholera epidemic from John Snow's 1854 dataset very well. Additionally, the fact that $A^3$ from \eqref{eq:A3} performs the best confirms Snow's hypotheses that the Broad Street pump was the source of the cholera outbreak, and that cholera does not spread easily between people or the air, which is known to be true today. 

\begin{figure}
    \includegraphics[width=\columnwidth]{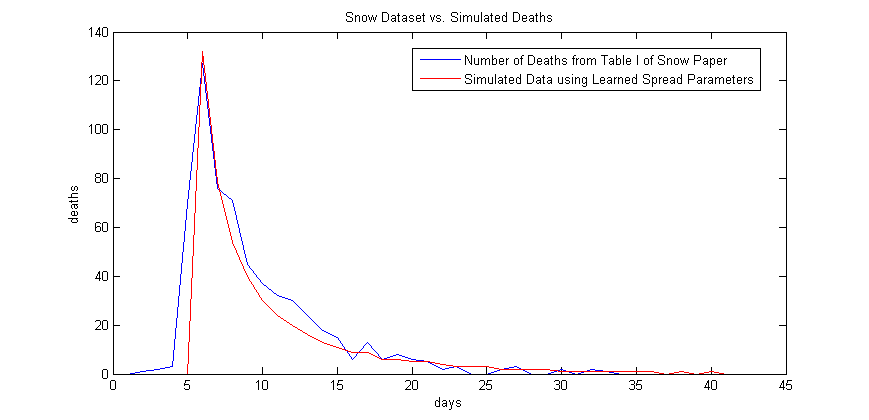}
    \caption{Comparison of Figures \ref{fig:deaths} and \ref{fig:deaths_sim}: Note that there is a difference in the magnitude, but the general shapes are very similar. This discrepancy is due to the fact that we used the spatial dataset in Figures \ref{fig:snow}-\ref{fig:snowdata}, which had only 489 documented deaths, while the cumulative data from Table I in \cite{snow1855mode}, shown in Figure \ref{fig:deaths} and the blue in this plot, has a total of 616 deaths. The difference of 127 has caused the discrepancy.}
    \label{fig:comp_best}
\end{figure} 

\section{Validation: USDA Dataset}\label{sec:usda}
The goal of this section is to study whether variation in the spatial pattern of farmers' enrollment in ACRE during 2009-2012 follows the spreading processes presented in Section~\ref{sec:mod}. 

\subsection{USDA Dataset}\label{sec:data}

The characteristics of the ACRE program make it a good candidate to empirically test the model of spreading. Farmers rely on the experience of neighbors in the adoption of new or complex technologies \cite{foster1995learning,Conley2010a,Duflo2011}. As we elaborate below, ACRE is a complex program. Social and professional networks will likely facilitate the spread of information about the ACRE program from the experiences of early adopters.

The ACRE program was introduced by the Food, Conservation and Energy Act of 2008 (2008 Farm Bill). Initial enrollment was unexpectedly low, in part because of the program's complexity \cite{Rejesus2013}. The ACRE payment $a_{ij}^k$ for year $k$ is calculated by the following formula: 
\begin{align}\nonumber
  a_{ij}^k &= \phi \frac{\hat{y}^k_{ij}}{\hat{y}^k_{\sigma j}}\min\{ (g^k_{\sigma j}-r^k_{\sigma j}), \frac{g^k_{\sigma j}}{4}\}  \min\{\rho_{ij}^k , b_{ij}^k\} 
  \mathds{1}(r^k_{ij} < g^k_{ij})\mathds{1}(r^k_{\sigma j} < g^k_{\sigma j}),\label{eq:acre}
\end{align} 
where $i$ is the farm index; $j$ is the crop or commodity that subsidy corresponds to; $\sigma$ indicates the state (e.g., Idaho); the benchmark yield (a.k.a. the Olympic yield) is
\begin{equation*}
  \hat{y}^k_{\iota j} = \frac{1}{3}\left[ \sum_{l=1}^5 y_{\iota j}^{k-l} - \max\{\Upsilon_{\iota j}\} - \min\{\Upsilon_{\iota j}\}\right],
\end{equation*}
where the set $\Upsilon_{\iota j} = \{y_{\iota j}^{k-1},\dots, y_{\iota j}^{k-5}\}$, for $\iota \in \{ i, \sigma \}$; the farm and state guaranteed revenues per acre are $g^k_{ij}=\hat{y}^k_{ij} {\overline{\overline{p}}}_j^k$ and $g^k_{\sigma j}=.9\hat{y}^k_{\sigma j} {\overline{\overline{p}}}_j^k$, respectively, with ${\overline{\overline{p}}}_j^k = \frac{1}{2}\displaystyle\sum_{l=1}^2 \bar{p}_j^{k-l}$, where $\bar{p}_j^{k}$ is the National Average Market Price of crop $j$
; actual revenue per acre is $r_{\iota j}^k = {y}^k_{\iota j} q_j^k$, with $q_j^k = \max\{0.7l_j^k,\bar{p}_j^{k}\}$, where $l_j^k$ is the National Loan Rate, which Congress sets in the farm bill; $\rho_{ij}^k$ is the number of acres \textit{planted} with crop $j$ on farm $i$; $b_{ij}^k$ is the number of acres of crop $j$ on farm $i$ qualifying for the Direct and Counter-cyclical Payment (DCP) subsidy, which are known as base acres; and $\mathds{1}(\cdot )$ is the indicator function.

The ACRE program benefits farmers by paying out when the farmers' \textit{actual revenue} is low. In contrast, the Counter-cyclical Program (CCP), which ACRE replaces, takes into account current prices but the payout is determined by the subsidized land's productivity in the early 1980s.

The cost to participate in ACRE is not trivial. By choosing ACRE, farmers must forgo 20\% of their annual unconditional subsidy, i.e., Direct Payment, and 30\% of the production subsidy they would receive in the event of low crop prices. Another important consideration is the decision to participate in ACRE is irreversible. Although farmers must   re-enroll in ACRE every year, they cannot switch back to the CCP. Failure to enroll disqualifies farmers from the benefits of ACRE but not the costs. Since switching from ACRE back to CCP is not allowed, we should expect the healing rate $\delta$ to be small compared to the infection rate $\beta$, when we learn the model parameters from the data.

The dataset includes the total annual payments received by each farm in the U.S. for each USDA-sponsored program from the year 2008 to 2012. Each datapoint has a program, payment amount, payment date, contract number, commodity (usually the crop), the farm number, and the customer's (farmer's) identification number and address. 
The dataset allows for the possibility to investigate the spread of the ACRE program through several different networks. 
Farmer-to-farmer networks could be created from the data by connecting farmer-nodes who receive payments on the same field or live nearby. 
Alternatively, farms can be aggregated to the county level. This approach allows us to convert the binary decision to enroll in ACRE into a continuous measure of the proportion of eligible farms that enroll in ACRE in each county. The proportion of farms enrolled in ACRE corresponds exactly to the density of infection, facilitating our investigation of the spread of ACRE.  For counties where no farms are enrolled in either, the infection state is set to zero. Alaska and Hawaii are omitted. The data for the four years considered can be found in Figures \ref{fig:data09}-\ref{fig:data12}.

\begin{figure}
  \begin{multicols}{2}
  \centering
  \begin{subfigure}{\linewidth}
    \includegraphics[width=\linewidth]{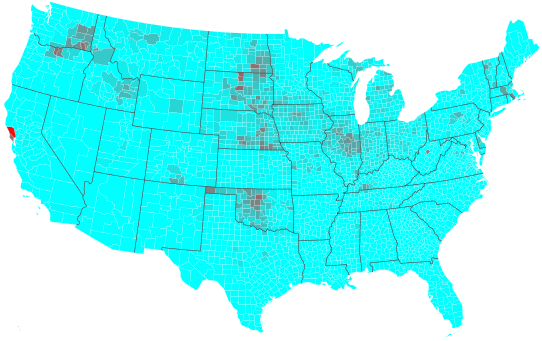}
    \caption{2009 Data}
    \label{fig:data09}
  \end{subfigure}
  \par
  \begin{subfigure}[b]{\columnwidth}
    \includegraphics[width=\columnwidth]{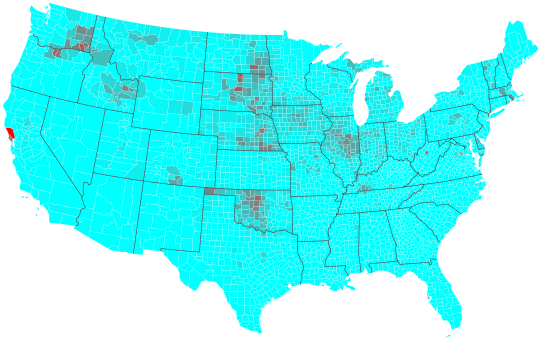}
    \caption{2010 Data}
    \label{fig:data10}
  \end{subfigure}
  \par
  \begin{subfigure}[b]{\columnwidth}
    \includegraphics[width=\columnwidth]{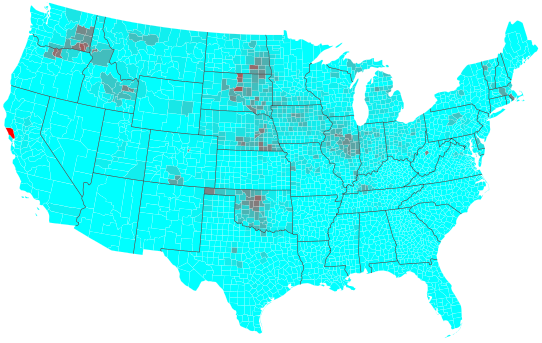}
    \caption{2011 Data}
    \label{fig:data11}
  \end{subfigure}
  \par
  \begin{subfigure}[b]{\columnwidth}
    \includegraphics[width=\columnwidth]{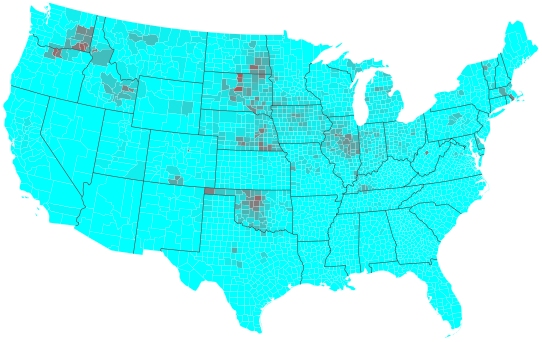}
    \caption{2012 Data}
    \label{fig:data12}
  \end{subfigure}
  \par
    \begin{subfigure}{\columnwidth}
      \includegraphics[width=\columnwidth]{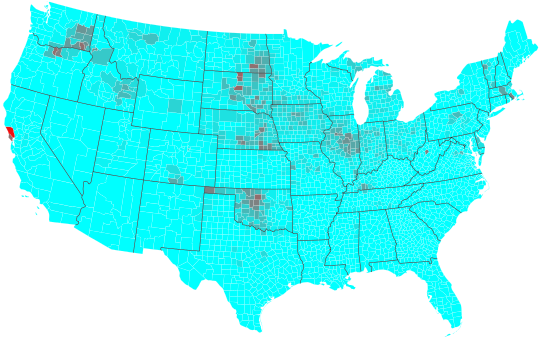}
      \caption{2009 Simulated Data}
      \label{fig:result09}
    \end{subfigure}
    \par
    \begin{subfigure}[b]{\columnwidth}
      \includegraphics[width=\columnwidth]{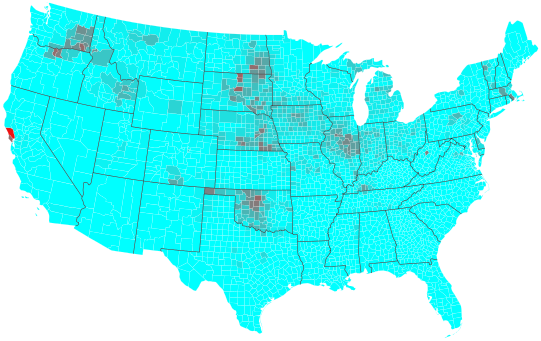}
      \caption{2010 Simulated Data}
      \label{fig:result10}
    \end{subfigure}
    \par
    \begin{subfigure}[b]{\columnwidth}
      \includegraphics[width=\columnwidth]{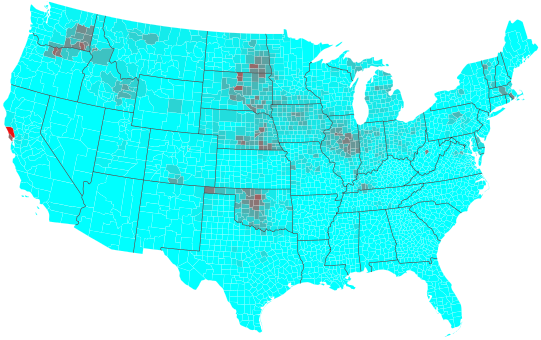}
      \caption{2011 Simulated Data}
      \label{fig:result11}
    \end{subfigure}
    \par
    \begin{subfigure}[b]{\columnwidth}
      \includegraphics[width=\columnwidth]{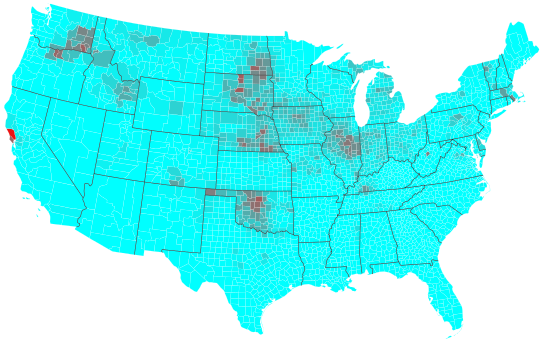}
      \caption{2012 Simulated Data}
      \label{fig:result12}
    \end{subfigure}
    \end{multicols}
  \caption{(Left) The percentage of farms enrolled in the ACRE Program that are enrolled in either ACRE or CCP calculated from the USDA dataset. (Right) Simulated data using Figure \ref{fig:data09} as the initial condition on the model in \eqref{eq:dis} with parameters calculated using the data from Idaho, given in \eqref{eq:idaho}.}
  \label{fig:data}
\end{figure}

\subsection{USDA Farm Subsidies as a Spread Process} \label{subsec:usda}

Now we use the learning techniques presented in Section~\ref{sec:id} and tested in Section~\ref{sec:sim} for the model in \eqref{eq:dis} on the data presented in Section~\ref{sec:usda}. 
We learn the homogeneous model parameters using a subset of the dataset, the USDA data from Idaho, and then simulate the spread of ACRE over the whole contiguous United States using the learned parameters. The adjacency matrices are calculated using  the adjacency of counties, that is, 
\begin{equation}
    a_{ij} = 
    \begin{cases}
    1, & \text{ if county }i \text{ and county } j \text{ share a border,}\\
    1, &\text{ if } i=j\text{,} \\
    0, & \text{ otherwise.}
    \end{cases}\label{eq:astate}
\end{equation} 
For calculating the adjacency matrix for Idaho, adjacent counties from bordering states were ignored. 
Applying \eqref{eq:id1} on the Idaho dataset, with $h=1$, gives the following spread parameters:
\begin{equation}
     \begin{bmatrix}  \hat{\delta} \\ \hat{\beta} \end{bmatrix} = \begin{bmatrix}   0.00909176 \\ 0.02237450 \end{bmatrix}.
    \label{eq:idaho}
\end{equation}
As expected, switching $h$ to the value $.1$ moves the decimal point one place to the right. 

To validate the model, we simulate the spread over the contiguous United States using the model in \eqref{eq:dis} with parameters calculated using the data from Idaho, given in \eqref{eq:idaho}, with the data from Figure \ref{fig:data09} being used as the initial condition. The simulation results are given in Figures \ref{fig:result09}-\ref{fig:result12}. The scaled error between the dataset, $\mathbb{F}$, and the simulated data, $\hat{\mathbb{F}}$, using the Frobenius norm is 
\begin{equation*}
    \frac{\left\| \mathbb{F} - \hat{\mathbb{F}} \right\|_{F}}{\left\| \mathbb{F} \right\|_{F}} = \frac{2.5331}{10.7872} = 0.2348.
\end{equation*}
While the model does not perfectly fit the data, it does seem to give some insight into the behavior of the system.
Therefore, if the USDA wanted to test a pilot program in a certain region of the country, for example Idaho, the resulting behavior could give some insight into how the whole country would react. The four time steps (years) does not allow the system to reach the equilibrium state, so the behavior depends significantly on the initial condition. Therefore, given the model learned from a pilot program, the USDA could determine the best counties to target for advertising of the new subsidy programs, assuming they wanted to maximize adoption of the new program. 




\section{Conclusion}\label{sec:con}


We have investigated the relationship between several different spread models. 
We have provided necessary and sufficient conditions for uniqueness of the healthy equilibrium, and proved the existence of an endemic state under certain conditions. We have also provided a necessary condition for asymptotic stability of the healthy state.  
We have presented necessary and sufficient conditions for learning discrete-time spread models from data. 
We have validated a discrete-time SIS virus spread model using John Snow's seminal cholera dataset with very good results. We have also used a USDA dataset to validate the same model by modeling the spread of farming subsidies among farms/farmers aggregated by county. 

In future work, we would like to provide further analysis on the endemic state of the system. 
We would like to further study identification of the spread model allowing noise in the data. 
We would also like to find other datasets to help further validate the SIS spread models. 
Finally, we would like to employ the results herein to develop effective control techniques to mitigate the spread of disease in real systems. 


\section{Acknowledgement}


The authors wish to thank Aditya Shivashankar (UIUC) for helping with the data processing of the Snow dataset.


\bibliographystyle{IEEEtran}
\bibliography{IEEEabrv,bib}

\end{document}